\newtheorem{thm}{Theorem}[section]
\newtheorem{lemma}[thm]{Lemma}
\newtheorem{prop}[thm]{Proposition}
\theoremstyle{definition}
\newtheorem{defin}[thm]{Definition}
\newtheorem{rem}[thm]{Remark}
\newcommand{\qedwhite}{\hfill \ensuremath{\Box}}
\renewenvironment{proof}{{\raggedright \bfseries Proof.}}{\qedwhite}
\numberwithin{equation}{section}
\newcommand\blfootnote[1]{%
	\begingroup
	\renewcommand\thefootnote{}\footnote{#1}%
	\addtocounter{footnote}{-1}%
	\endgroup
}
\def\CC{\mathbb C}
\def\NN{\mathbb N}
\def\ZZ{\mathbb Z}
\def\RR{\mathbb R}
\def\Chi{\mbox{\Large$\chi$}}
\def\sinc{\text{sinc}}
\def\esssup{\mathop{\textup{ess}\sup}}
\newcommand{\vc}[1]{\overrightarrow{#1}}
\title{Constructive Approximation in Mixed norm Spaces}
\author{
	Priyanka Majethiya\textsuperscript{1}\thanks{Email: priyankamajethiya2000@gmail.com}, Shivam Bajpeyi\textsuperscript{1}\thanks{Corresponding Author. Email: shivambajpai1010@gmail.com, shivambajpeyi@amhd.svnit.ac.in}, Dhiraj Patel\textsuperscript{2}\thanks{Email: patel@cs.rwth-aachen.de}\\
	{\small \textsuperscript{1}Department of Mathematics, Sardar Vallabhbhai National Institute of Technology Surat,}\\ {\small Gujarat-395007, India}\\
	{\small \textsuperscript{2}Computational Network Science, RWTH Aachen University, Aachen, Germany}
}
\date{}
\begin{document}
	
	\maketitle

\begin{abstract}
    The concept of mixed norm spaces has emerged as a significant interest in fields such as harmonic analysis. In addition, the problem of function approximation through sampling series has been particularly noteworthy in approximation theory. In this paper, we study the approximation problem in diverse mixed norm function spaces. We utilize the family of Kantorovich-type sampling operators as approximator for the functions in mixed norm Lebesgue space $L^{\overrightarrow{P}}(\mathbb{R}^n),$ with $ \overrightarrow{P}=(p_1,p_2,\dots,p_n),$ and mixed norm Orlicz space $L^{\overrightarrow{\Phi}}(\mathbb{R}^n),$ with $\overrightarrow{\Phi}=\left(\phi_{1},\phi_{2},\dots, \phi_{n} \right),$ where each $\phi_{i}$ is an Orlicz function (defined in Section \ref{2}). The Orlicz spaces are a generalized family that encompasses	many significant function spaces. We establish the boundedness of the family of generalized and Kantorovich-type sampling operators within the framework of these mixed norm spaces. Further, we study the approximation properties of Kantorovich-type sampling operators in both mixed norm Lebesgue and Orlicz spaces. Lastly, we provide several examples of appropriate kernels that demonstrate the applicability of the proposed theory.
    
    \paragraph{Keywords:}  Approximation of functions; Kantorovich-type sampling series; Mixed norm Lebesgue space; Mixed norm Orlicz space.
    \blfootnote{2020 \textit{Mathematics Subject Classification:} 94A20, 26D15, 46B09, 46E30, 47B34.}
\end{abstract}

\section{Introduction}

In broad terms, the aim of function approximation is to identify a simpler function that closely represents a complicated function. Sampling and reconstruction of functions have been playing a pivotal role in approximation theory, serving as fundamental tools for accurately representing functions through discrete sample values \cite{branch}. The pioneer study of sampling and reconstruction problem for bandlimited functions was carried out in \cite{ but,survey,noise}.
\par 
A fundamental challenge in signal processing is to recover a signal $f$ from their discrete sample values. Whittaker-Kotelnikov-Shannon (WKS) \cite{survey,past, vgood} provide a reconstruction formula to recover bandlimited signal from their discrete sample measurements.
In particular, a bandlimited signal $f:\RR \rightarrow \CC$ with bandwidth $2\pi w$, for some $w>0$, i.e., the Fourier transform of $f$ is supported on $[-\pi w,\pi w]$, can be entirely reconstructed from their samples values at $(\frac{k}{w})_{k \in \ZZ}$ by using the formula
\begin{equation}\label{shannon}
    f(x)=\sum_{k\in \ZZ} f\left(\frac{k}{w} \right)\sinc(xw-k),\ x \in \RR,
\end{equation}
where $\sinc(x) :=\frac{\sin \pi x} {\pi x}$ for $x\neq 0$ and $\sinc(0)=1$.
The WKS sampling theorem is the classical sampling theorem originally formulated by Shannon, serving as a fundamental concept in communication theory, information theory, and signal processing \cite{survey,  branch,jerry}. 
Moreover, the application is not limited to communication theory but also have remarkable advancements in areas such as optics, time-varying systems, boundary value problems, approximation and interpolation theory, and Fourier transform \cite{branch,jerry,noise}.
Some notable developments related to the WKS sampling theorem are detailed in \cite{delta,survey, butzer,  vgood,  jerry}.

\paragraph{Generalized Sampling Series.} 
The WKS sampling theorem was generalized for the signals in the shift-invariant space, image of idempotent integral operator and so on \cite{shift}. The key concept is to recover the signal uniquely and stably from their sample values without losing any information. In (\ref{shannon}), the sinc kernel is utilized, due to its computational limitations and the fact that $\sinc\,t$ is not absolutely integrable over \(\mathbb{R}\) \cite{vgood}. To address this, Butzer  \cite{butzer1, past} introduced a generalized sampling series using the discrete sample values of the signal $f$ and kernel $\Chi$.
The generalized sampling series for a signal $f$ is given by 
\begin{equation}\label{gen1}
    S_w(f)(x) :=\sum_{k\in \ZZ} f\left(\frac{k}{w} \right)\Chi(wx-k),\quad x \in \RR, \quad w>0,
\end{equation}
where $\Chi$ denotes the kernel function satisfying certain assumptions \cite{past}. 
The pointwise and uniform convergence for (\ref{gen1}) was initially discussed in the foundational work of Butzer and Stens \cite{butzer1, past}.
Bardaro et al. \cite{delta} extended the classical Shannon sampling series (\ref{shannon}) by introducing an $L^p$ averaged modulus of smoothness to characterize intricate signals. The reconstruction of $p$-integrable functions by generalized sampling series was further investigated in \cite{vgood}. 
Additionally, Butzer et al. \cite{butzer1} studied the approximation properties of generalized sampling series for both continuous and discontinuous functions. In a subsequent work, the same author developed convergence results for the multivariate generalized sampling series in the space of continuous functions \cite{multigen}. Further details on the properties and applications of these operator can be found in  \cite{delta, but,multigen,butzer1,  vgood}.
    
\paragraph{Kantorovich-type Sampling Series} Later,  Bardaro et.al. \cite{bardaro10} introduced the Kantorovich-type sampling series. Instead of taking sample values, these series compute the average value of the function over each interval, namely $w \int_{\frac{k}{w}}^{\frac{k+1}{w}}f(u)du$, for $w>0$. For  a locally integrable function $f:\RR \rightarrow \RR$, the Kantorovich-type sampling series is defined as
\begin{equation}\label{kantt}
    K_{w}(f)({x}) :=\sum_{{k}\in \ZZ} \Chi(w{x}-{k})\hspace{3pt} w \int_{\frac{k}{w}}^{\frac{k+1}{w}}f(u)du, \quad  x \in \RR, \quad w>0 .
\end{equation} 
The approximation of continuous functions by Kantorovich-type sampling series have been established in \cite{bardaro10, multi}, while the effectiveness of the series defined in (\ref{kantt}) have also been demonstrated for reconstructing signals that may not be continuous \cite{orlova}.
Although the averaging techniques improve the regularity of these series, achieving convergence at the discontinuity points of $f$ remains a challenge due to technical limitations. This issue is closely related to the application of Kantorovich-type sampling series in image reconstruction and enhancement.

Recently, Costarelli et al. \cite{disc} studied the approximation properties of (\ref{kantt}) for discontinuous functions across various fields, including biomedical and engineering applications \cite{cagini, civil}.
Cantarini et al. \cite{diff} made a significant contribution to the study of both differentiable and not differentiable functions for (\ref{kantt}). In \cite{morder}, the convergence behavior of $m$- order Kantorovich-type sampling operators is discussed. The generalized Kantorovich-type sampling operators is introduced in \cite{orlova},
where local averages are determined through the convolution
with an arbitrary function $\Chi \in L^1(\RR)$.  Bardaro et al. \cite{bardaro10} pioneered the study of the convergence of the operators $(K_{w})_{w>0}$ in the framework of Orlicz spaces. This line of research have been further extended to the multivariate form of Kantorovich-type sampling operators in \cite{multi}. The development of the theory in a multivariate framework is important for applications, particularly in signal theory and image processing.
Several recent developments concerning the family (\ref{kantt}) are discussed in \cite{bardaro10, diff, disc, multi, orlova}.

This paper aims to investigate the approximation properties of a family of sampling operators within the framework of a mixed norm structure. We study its approximation capabilities in a broad class of functions spaces, which may not be continuous, such as mixed norm Lebesgue spaces and mixed norm Orlicz spaces.  First, we examine the boundedness of generalized sampling operators in mixed norm Lebesgue space. We then establish the boundedness and convergence of Kantorovich-type sampling operators within mixed norm Lebesgue spaces. Further, we extend these results in the setting of mixed norm Orlicz spaces.

The theory of \emph{Orlicz spaces}, which provides a natural generalization of Lebesgue spaces, was established by W. Orlicz in \cite{or}. It is noteworthy that the Orlicz spaces, denoted as $L^{\phi},$ consist of many significant function spaces for different choice of $\phi-$functions, namely classical Lebesgue spaces, Exponential spaces and Logarithmic spaces (also known as Zygmund spaces).
The seminal work on Orlicz spaces can be found in \cite{orlicz,1}, and since then, various researchers have studied their properties and potential applications in diverse areas. The rigorous framework of Orlicz spaces and their properties was introduced by Rao and Ren \cite{Raobook}, along with related topics such as compact sets, the product of Orlicz spaces, topological structures, and more.  Moreover, \cite{orliczapp} provides a detailed discussion of fundamental applications of Orlicz spaces in areas such as Fourier analysis, stochastic analysis, and nonlinear PDE.  Function spaces derived from Orlicz spaces have proven highly effective in the study of functional analysis and operator theory \cite{sob, kita}. Submultiplicative functions play an important role in interpolation theory and operator theory within Orlicz spaces, certain results are presented in \cite{2}. Hence, Orlicz spaces offer a unified framework for studying Kantorovich-type sampling operators in different function spaces. In particular, mixed norm function spaces are considered due to their flexible structure and broad applicability.

In \emph{mixed norm spaces}, the norm of measurable multivariable functions is determined by the iterative  application of different norm functions. The foundation for this concept is found in the pioneering article by Benedek and Panzone \cite{dct}. For instance, unlike classical Lebesgue spaces, in mixed norm Lebesgue spaces, the exponent for integrability can differ for each variable. This flexibility
makes it more useful for areas where different variables may have different properties.  
These spaces are particularly relevant for time-varying signals \cite{sun} and playing a pivotal role in examining the solutions to partial differential equations that involve both time and spatial variables, such as the heat and wave equations \cite{heat}. 
In \cite{dct,burenkov,grey, vitali, pq}, mixed norm spaces are examined from an operator-theoretic perspective. The necessary and sufficient condition for the embedding of mixed norm Lebesgue spaces and mixed norm Orlicz spaces are discueed in \cite{pq}. Maligranda’s work \cite{3} focused on the Calderón–Lozanovskiĭ construction, particularly its application to mixed norm function spaces generated from Banach ideal spaces. His research included a rigorous analysis of the Calderón product associated with these function spaces.

To the best of our knowledge, these operators have not been studied in mixed norm Lebesgue spaces and mixed norm Orlicz spaces. Given the enduring interest in Kantorovich-type sampling operators in approximation theory, examining their behavior in mixed norm settings is of particular significance.

\paragraph{Contributions.} The key features of the article are as follows.
\begin{itemize}
    \item   In \cite{delta, bardaro10, multi}, the boundedness and convergence of generalized and Kantorovich-type sampling series have been studied in Lebesgue space and Orlicz space. Here, we investigate these approximation properties for aforementioned family of sampling operators within mixed norm structures.  In particular, the mixed norm Orlicz spaces have received limited attention in the literature, this study also contributes to the theoretical advancement of these spaces.
   
    \item This paper provides a broader framework for studying various classes of function spaces, due to the general structure of Orlicz spaces. On the way of establishing the convergence of Kantorovich-type sampling series in mixed norm Orlicz space $ L^{\overrightarrow{\Phi}}(\mathbb{R}^n)$, we prove a fundamental result asserting that, the space of compactly supported continuous functions $C_{c}(\RR^{n})$ is dense in mixed norm Orlicz space $ L^{\overrightarrow{\Phi}}(\mathbb{R}^n)$ (see Lemma \ref{ce}).
\end{itemize}

\paragraph{Outline.}
The paper is structured as follows. In Section \ref{2}, we begin by recalling some fundamental definitions and results concerning mixed norm Lebesgue spaces and Orlicz spaces, which are requisite for the proposed study. In Section \ref{3}, we prove the boundedness of generalized sampling operators in $L^{\overrightarrow{P}}(\RR^n)$, followed by we demonstrate the boundedness and convergence of Kantorovich-type sampling operators in  $L^{\overrightarrow{P}}(\RR^n)$ in Section \ref{4}. 
In Section \ref{5}, we extend these results to the setting of mixed norm Orlicz space $L^{\overrightarrow{\Phi}}(\RR^n)$. In order to prove these results, we establish a density result in mixed norm Orlicz space.
In Section \ref{6}, we present several examples of kernel satisfying the assumptions (given in Section \ref{2}) of presented theory. 
 
\section{Preliminaries} \label{2} 
In this section, we define some basic definitions and discuss preliminary results on mixed norm Lebesgue space and mixed norm Orlicz space.

\subsection{Notations}
The notations $\NN^n, \ZZ ^n $ and $\RR ^n$ represent the set of all ordered $n$-tuples of natural numbers, integers and real numbers respectively. Let $C(\mathbb{R}^n)$ denotes the set of all bounded and uniformly continuous functions, it also forms a normed vector space with the usual sup norm $\displaystyle \|f\|_{\infty}:= \esssup_{\textbf{t} \in \RR^n} |f(\textbf{t})|.$  For $\textbf{x}=(x_1,\dots,x_n)$  and  $\textbf{y}=(y_1,\dots,y_n)$, we write $(\textbf{x}+\textbf{y})=(x_1+y_1,\dots,x_n+y_n)$, and
$\zeta\textbf{x}=(\zeta x_1,\dots,\zeta x_n)$, for any scalar $\zeta \in \RR$.
The space $L^{p}(\mathbb{R}^n), \hspace{4pt}\text{for}\ 1 \leq p < \infty$ contains of all $p$-integrable functions on $\mathbb{R}^n$ with the standard Lebesgue $p-$norm. The space $L^{\infty}(\mathbb{R}^n)$ refers to the collection of all measurable functions that are bounded and equipped with the sup norm $\|\cdot\|_{\infty}$. Furthermore, $M(\RR^n)$ is the space of all bounded and measurable real valued functions.
    
\subsection{Mixed norm Lebesgue space}\label{subsection:mixLebesgue}
Let $(\Omega_i,S_i,\mu_i)$ be the $\sigma$-finite measure spaces, for $1 \leq i \leq n $, and $(\Omega,S,\mu)$ represent corresponding product measure space, i.e., $\Omega=\prod\limits_{i=1}^{n}{\Omega_i}.$ 
The mixed norm Lebesgue space is defined as follows.
\begin{defin}\label{def:mixed_lebesgue}(Mixed norm Lebesgue space)
    For $1 \leq p_i \leq \infty$ and $\overrightarrow P=(p_1,\dots,p_n)$, the mixed norm Lebesgue space $L^{\vc{P}}(\Omega)$ is defined as
    \begin{flalign*}
        L^{\vc{P}}(\Omega)=\left\lbrace f :\Omega \rightarrow \RR \text{ measurable } : \|f\|_{\vc{P}} <\infty \right\rbrace,
    \end{flalign*}
    where
    \begin{equation*} \label{2.1}
		\|f\|_{\vc{P}} = \bigg( \int_{\Omega_n}\dots\bigg( \int_{\Omega_2}\bigg( \int_{\Omega_1} |f(x_1,\dots,x_n)|^{p_1}d\mu_1\bigg) ^{\frac{p_2}{p_1}}d\mu_2\bigg) ^{\frac{p_3}{p_2}}\dots d\mu_n\bigg) ^{\frac{1}{p_n}}.
    \end{equation*}
\end{defin}

The mixed norm Lebesgue space can equivalently be viewed within the framework of vector-valued Lebesgue spaces, represented as follows: \[ L^{\vc{P}}(\Omega) = L^{p_n}_{x_n}\left( \Omega_n, L^{(p_1, p_2, \dots, p_{n-1})}_{x_1, x_2, \dots, x_{n-1}} \left( \prod_{k=1}^{n-1} \Omega_k\right) \right).\] In the subsequent analysis, we only focus on the tuples $\vc{P}=(p_1,p_2,\dots,p_n)$ with non-decreasing components, i.e., $p_i\leq p_{i+1}$ for $1\leq i\leq n-1$. Although, this is a strong assumption and does not encompass all possible mixed norm Lebesgue space $L^{\vc{P}}(\Omega)$, it still generalizes the classical Lebesgue space $(L^{p}(\Omega),\|\cdot\|_p)$. In particular, when $\vc{P}=(p,p,\dots,p)\in [1,\infty)^{n}$ is constant vector, the mixed norm space $\big(L^{\vc{P}}(\Omega), \|\cdot\|_{\vc{P}}\big)$ is equivalent to the standard Lebesgue space $(L^{p}(\Omega),\|\cdot\|_p)$. This monotonicity assumption on the exponents is not only analytically convenient but also appears in the study of sampling and reconstruction within reproducing kernel subspaces of mixed norm Lebesgue space \cite{anujmixed}.

\begin{defin}
    The space $\ell^{\vc{P}}(\ZZ^n)$ is a collection of sequences on $\ZZ^n$ with the mixed norm defined as follows:
    \begin{flalign*}
        \|x\|_{\ell^{\vc{P}}}=\bigg( \sum_{k_n \in \ZZ}\cdots \bigg( \sum_{k_2 \in \ZZ}\bigg(  \sum_{k_1\in \ZZ}\left|x(k_1,k_2,\dots,k_n) \right|^{p_1}\bigg) ^{\frac{p_2}{p_1}}\bigg)^{\frac{p_3}{p_2}}\cdots\bigg)^{\frac{1}{p_n}}<\infty.
    \end{flalign*}
\end{defin}
For a positive real $w$, we define the space of weighted sequence space $\ell^{\overrightarrow{P}}_w(\ZZ^n)$ such that $$\|x\|_{\ell^{\overrightarrow{P}}_w} :=\bigg( \frac{1}{w}\sum_{k_n \in \ZZ}\cdots \bigg( \frac{1}{w}\sum_{k_2 \in \ZZ}\bigg(  \frac{1}{w}\sum_{k_1\in \ZZ}\left|x(k_1,k_2,\dots,k_n) \right|^{p_1}\bigg) ^{\frac{p_2}{p_1}}\bigg)^{\frac{p_3}{p_2}}\cdots\bigg)^{\frac{1}{p_n}}<\infty.$$
For detailed information on mixed norm Lebesgue spaces, we refer \cite{ivec, dct,burenkov,grey, 3, pq}.

\subsection{The Orlicz spaces}\label{s2.3}
A convex and lower continuous function $\phi:[0,\infty)\to [0,\infty]$ is called an Orlicz function if it satisfies the following conditions: 
\begin{enumerate}[label=(\roman*), left=0pt]\label{2.3}
    \item  $\lim\limits_{u\to \infty} \phi(u)=\infty$, and  $\phi(0)=0$,
			
    \item $\phi(u)>0$ for  every $u>0$, and is non-decreasing on $[0,\infty)$.
\end{enumerate}
It is important to note that \(\phi\) may take the value \(\infty\) and is not necessarily continuous. For instance, for a fixed \( b > 0 \),
\[	\phi(u) = 
\begin{cases} 
    0, & \text{if } u \leq b \\
    \infty, & \text{if } u > b,
\end{cases}\]
defines an Orlicz function. Orlicz functions generalize the power functions used in defining classical $L^p$ spaces and allow for more flexible control over growth and integrability conditions.
\par 
To construct a function space associated with an Orlicz function $\phi$, we introduce a fundamental building block: the modular functional associated with $\phi$. 
 Given a measurable space $(\Omega, \mu)$, 
the functional $I^{\phi}: M(\Omega) \rightarrow [0,\infty]$ is defined by 
\begin{equation*} 
    I^{\phi}(f):=\int_{\Omega} \phi(|f(x)|) d\mu .
\end{equation*}
This functional generalizes the expression $\int |f|^p\,d\mu$ from classical Lebesgue theory and is central to defining the Orlicz space.
The functional $I^{\phi}$ is called a modular functional and satisfies the following properties for every $f,\,g \in  M(\Omega) :$
\begin{enumerate}[label=(\roman*)]
    \item $I^{\phi}(f)=0$ if and only if $f=0,$
    \item $I^{\phi}(-f)=I^{\phi}(f),$
    \item $I^{\phi}(\gamma f +\beta g) \leq I^{\phi}(f)+I^{\phi}(g) , \hspace{3pt}for\hspace{3pt} \gamma,\beta \geq 0, \hspace{3pt} \gamma +\beta =1.$
\end{enumerate}
Moreover, for any $f \in  M(\Omega)$, the map $\alpha\mapsto I^{\phi}(\alpha f)$ is non-decreasing for $\alpha >0$.
\begin{defin} (Orlicz space)
    Let $\phi$ be an Orlicz function. Then the Orlicz space generated by $\phi$ is given as $$L^{\phi}(\Omega)= \left\lbrace  f \in M(\Omega) : I^{\phi}(\lambda f) < \infty \hspace{4pt} \text{for some } \lambda >0 \right\rbrace, $$ equipped with the norm $$\|f\|_{\phi}= \inf \left\lbrace \lambda >0 : \int_{\Omega} \phi\left(\frac{{|f(x)|}}{\lambda}\right)d\mu \leq 1 \right\rbrace.$$
\end{defin}
Several notable examples of Orlicz functions, along with their generated Orlicz spaces, are outlined below.
\begin{enumerate} [label=(\roman*), left=0pt]
    \item Let \( p \in [1, \infty] \), and define $\phi(y)=y^p$ when $p \in [1, \infty).$ 
    For $p=\infty$ set \[\phi(y) = 
    \begin{cases}
        0, & \text{when }  y \leq 1, \\
        \infty, & \text{when } y > 1.
    \end{cases}\]
    This generates the classical Lebesgue spaces $L^p(\RR)$, $1\leq p \leq \infty.$
    
    \item $\phi(y)=(y-1)\chi_{[1,\infty)}(y)$, associated with the combined space $L^1(\RR^n)+L^{\infty}(\RR^n)$.
	
    \item $\phi(y)=y\log y$, which defines the logarithmic space $L\log L(\RR^n)$.
	
    \item $\phi(y)=\exp(y^a)-1$ for $a>0$, leading to Exponential spaces $\exp L^a$.
\end{enumerate}
	
\begin{defin}($\Delta_2-$condition)\label{delta2}
    The $\Delta_2-$condition for an Orlicz function $\phi$ is satisfied if there exists a constant  $K>0$,  such that $$\phi(2u)\leq K\phi(u), \quad \forall u>0.$$
\end{defin}
The Orlicz space associated to Orlicz function with $\Delta_2$-condition provides important regularity properties. In particular,
\begin{itemize}[itemsep=0pt]
    \item For every $f\in L^{\phi}(\Omega)$, $I^{\phi}(\lambda f)<\infty$ for every $\lambda>0$.
    
    \item Given a Banach space structure, modular convergence and norm convergence coincide.
    
    \item For $\phi, \psi\in \Delta_2$, where $\psi$ is Orlicz conjugate of $\phi$, the space $L^{\phi}(\Omega)$ is reflexive.
\end{itemize}
Examples of Orlicz functions that satisfy the $\Delta_{2}-$condition are  $\displaystyle \phi(u)=u(\log(e+u))$ and $\displaystyle \phi(u)=u^p,$ for $ 1 \leq p <\infty$. However, $\phi(u)=(e^u-u-1)$ does not belong to $\Delta_{2}$. 

\begin{defin}(Jensen's integral inequality) 
    The Jensen's integral inequality  is satisfied for every function \( f \in L^{\phi}(\Omega) \) provided the following inequality is holds 
    \[ \phi\left( \frac{1}{\mu(\Omega)}\int_\Omega f(x)\,d\mu\right) \leq \frac{1}{\mu(\Omega)}\displaystyle \int_\Omega \phi(f(x))\,d\mu.\]
\end{defin}

For detailed discussions on the definitions and results concerning Orlicz spaces, see \cite{or1, 2,orlicz, 1,  mode, mod, Raobook, orliczapp}.

\subsection{The Mixed norm Orlicz spaces}

In Subsection \ref{s2.3}, we introduced classical Orlicz spaces based on a single Orlicz function. We now extend this framework to the setting of mixed norms.

Let $\vc{\Phi}=(\phi_1,\phi_2,\dots,\phi_n)$ be a finite sequence of Orlicz functions. The modular functional $I^{\vc{\Phi}}: M(\Omega) \to [0,\infty]$ associated to $\vc{\Phi}$ is defined by 
\begin{equation*} \label{2.2}
    I^{\overrightarrow{\Phi}}(f):=\int_{\Omega_n} \phi_{n}\cdots \left( \int_{\Omega_2}\phi_{2} \left( \int_{\Omega_1} \phi_1 \left( |f(x_1,\dots,x_n)| \right)d\mu_1\right)d\mu_{2}\cdots \right)d\mu_n.
\end{equation*}

\begin{defin}(Mixed norm Orlicz space)
    The mixed norm Orlicz space generated by $ \overrightarrow{\Phi}=(\phi_1,\phi_2,\dots,\phi_n)$ is defined as
    $$L^{\overrightarrow{\Phi}}(\Omega) :=\left\lbrace f \right. \in M(\Omega) : I^{\overrightarrow{\Phi}}(\lambda f) < \infty \hspace{3pt} \text{ for some }\hspace{2pt} \lambda >0 \left.\right\rbrace $$
    induced with the norm
    $$\|f\|_{\overrightarrow{\Phi}}= \inf \left\lbrace \lambda >0 : I^{\vc{\Phi}}(f/\lambda) \leq 1 \right\rbrace. $$	
\end{defin}
 
 

The space $L^{\vc{\Phi}}(\Omega)$ generalizes the notion of mixed norm Lebesgue space $L^{\vc{P}}(\Omega)$, as introduced in Subsection \ref{subsection:mixLebesgue}, paritcularly under the assumption that the components of $\vc{P}=(p_1,p_2,\dots, p_n)$ are non-decreasing. Specifically, the Orlicz functions are chosen as $\phi_1(x)=x^{p_1}$ and $\phi_i(x)=x^\frac{p_i}{p_{i-1}}$, for $2 \leq i \leq n$, the corresponding mixed norm Orlciz space $L^{\vc{P}}(\Omega)$ coincides with mixed norm Lebesgue space $L^{\vc{P}}(\Omega)$.

We say that $\overrightarrow{\Phi}=(\phi_1,\dots,\phi_n)\in \Delta_{2}$, if each $\phi_i$ satisfies $\Delta_{2}$-condition. The notion of modular convergence in $L^{\overrightarrow{\Phi}}(\Omega)$ plays an important role to study the norm convergence. A sequence of functions $(f_k)_{k > 0}$ in $L^{\overrightarrow{\Phi}}(\Omega)$ is said to modularly convergent to a function $f \in L^{\vc{\Phi}}(\Omega)$ if $$\lim_{k\to\infty}I^{\vc{\Phi}}(\lambda(f_k-f))=0$$ for some $\lambda >0.$ If $\vc{\Phi}\in \Delta_2$ then for every $\lambda > 0$, $I^{\vc{\Phi}}(\lambda(f_k-f)) \to 0$ as $k\to \infty$. Hence, $\|f_k-f\|_{\vc{\Phi}}\to 0$ as $k\to \infty$. Moreover, the norm convergence and modular convergence are equivalent if and only if $\overrightarrow{\Phi}$ satisfies the $\Delta_2$-condition. 


The Shannon sampling series (\ref{shannon}) is limited by the assumption of band-limited signals \cite{vgood}. To extend this study to a more general setting, Butzer et al. \cite{multigen} introduced the generalized sampling series. 
In the subsequent section, we study the generalized sampling series within the framework of mixed norm Lebesgue space.
\section{Generalized sampling operator on mixed norm Lebesgue space}\label{3}
A function is said to be a kernel $\Chi: \RR^{n}\rightarrow \RR$ if it belongs to $L^{1}(\RR^n)$, is bounded in a neighborhood of the origin, and fulfills the following conditions:
\begin{enumerate}
    \item For any ${\textbf {u}}\in \RR^n$, it follows that 
    \begin{equation} \label{re1}
        \sum_{\textbf{k}\in \ZZ^n}\Chi(\textbf{u}-\textbf{k})=1 .
    \end{equation}

    \item  Absolute moment of order $\alpha$ is finite, i.e.,
    \begin{equation}\label{re2}
        m_{\alpha}(\Chi) := \sup_{\textbf{u} \in \RR^n}\sum_{\textbf{k}\in \ZZ^n}|\Chi(\textbf{u}-\textbf{k})| \hspace{2pt}\|\textbf{u}-\textbf{k}\|^{\alpha}_2<+\infty,
    \end{equation}
    where $\|\cdot\|_2$ denotes the euclidean norm.
\end{enumerate}

The reader can easily verify that the kernel $\Chi\in L^1(\RR^n)$ with \eqref{re1} and \eqref{re2} implies $$m_0(\Chi) \geq \max\{1,\|\Chi\|_1\}.$$

\begin{defin}(Relatively separated set)
    A countable set \(\Gamma \subset \mathbb{R}^n\) is defined to be \emph{relatively separated set} with gap \(\kappa > 0\) if it satisfies the following conditions:
    \begin{align*}
        A_\Gamma(\kappa) :=& \inf_{\textbf{x} \in \mathbb{R}^n} \sum_{\gamma \in \Gamma} \chi_{B\left(\gamma; \frac{\kappa}{2}\right)}(\textbf{x}) \geq 1,\\
        B_\Gamma(\kappa) :=& \sup_{\textbf{x} \in \mathbb{R}^n} \sum_{\gamma \in \Gamma} \chi_{B\left(\gamma; \frac{\kappa}{2}\right)}(\textbf{x}) < \infty,
    \end{align*}
    where \( B(\gamma; r) \) represents an open cube in a normed linear space $(\RR^n,\|\cdot\|_{\infty})$ with side-length \( 2r \).
    The notion of a relatively separated set in $\RR$ is referred to as an admissible sequence \cite{delta}.
\end{defin}

\begin{defin}
 The multivariate generalized sampling series associated with a signal $f$ and kernel $\chi$ is defined as
\begin{equation} \label{3.3}
    S_{w}(f)(\textbf{x})=\sum_{\textbf{k}\in \ZZ^n}f\left(\frac{\textbf{k}}{w}\right)\Chi(w\textbf{x}-\textbf{k}), \quad \textbf{x} \in \RR^n, \quad  w>0.
\end{equation}
\end{defin}
The multivariate generalized sampling series \eqref{3.3} does not converge in general.  Therefore, we need to restrict our discussion to an appropriate subspace of $L^{\overrightarrow{P}}(\RR^n)$. Let us define the space $\Delta^{\vc{P}}(\RR^n)$ by
\begin{equation*}
    \Delta^{\vc{P}}(\RR^n):=\left\lbrace  f\in M(\RR^n): \left\lVert(f(x_{\textbf{k}}))\right\rVert_{\ell^{\vc{P}}_w} <\infty \text{ for each relatively separated set } (x_{\textbf{k}})_{\textbf{k} \in \ZZ^n} \right\rbrace.
\end{equation*}

In the following proposition concerning the space $\Delta^{\vc{P}}$, we utilize $\|\cdot\|_{\Delta^{\vc{P}}} :\Delta^{\vc{P}} \rightarrow \RR$, given by $\|f\|_{\Delta^{\vc{P}}} = \left\|f \left( \frac{\textbf{k}}{w}\right) \right\|_{\ell^{\overrightarrow P}_w}$. 

\begin{prop} \label{p1}
    The space $\Delta^{\vc{P}}(\RR^n)$ is a proper subspace of $L^{\overrightarrow{P}}(\RR^n)$, and $\|\cdot\|_{\Delta^{\vc{P}}}$ is a seminorm on $\Delta^{\vc{P}}(\RR^n)$.
\end{prop}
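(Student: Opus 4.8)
The plan is to verify three claims in turn: (i) $\Delta^{\vc P}(\RR^n) \subseteq L^{\vc P}(\RR^n)$, (ii) the inclusion is proper, and (iii) $\|\cdot\|_{\ell^{\vc P}_w}$ is a seminorm on $\Delta^{\vc P}(\RR^n)$. For (iii), which is the easiest piece, I would note that for fixed $w$ the map $f \mapsto (f(x_{\mathbf k}))_{\mathbf k}$ is linear, and $\|\cdot\|_{\ell^{\vc P}_w}$ is — up to the fixed weights $w^{-1/p_i}$ coming from the $\tfrac1w$ factors — just the genuine norm $\|\cdot\|_{\ell^{\vc P}}$ of the mixed-norm sequence space $\ell^{\vc P}(\ZZ^n)$ evaluated at the sampled sequence. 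Hence positivity, absolute homogeneity and the triangle inequality are inherited from $\|\cdot\|_{\ell^{\vc P}}$; it fails to be a norm only because a nonzero $f$ may vanish on a particular relatively separated set (or, more to the point, because membership in $\Delta^{\vc P}$ already presupposes finiteness over \emph{every} relatively separated set, but the functional as written is tied to one such set), so only the seminorm property survives.

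For (i), the strategy is a ``sampling dominates integration'' comparison. Given $f \in \Delta^{\vc P}(\RR^n)$ I want to bound $\|f\|_{\vc P}$ by a constant times $\|(f(x_{\mathbf k}))\|_{\ell^{\vc P}_w}$ for a suitably chosen relatively separated set; since $f$ is bounded (elements of $\Delta^{\vc P}$ are in $M(\RR^n)$ and the hypothesis forces local boundedness), the cleanest route is to dominate $|f|$ pointwise, cube by cube, by a step function built from a relatively separated grid. Concretely, take the grid $\Gamma_w = \tfrac1w\ZZ^n$, which is relatively separated, let $Q_{\mathbf k}$ be the half-open cube of side $1/w$ with corner $\tfrac{\mathbf k}{w}$, and compare $\int_{Q_{\mathbf k}} |f|^{p_1}$ against the supremum of $|f|^{p_1}$ over a slightly enlarged cube; iterating the one-dimensional estimate through each of the $n$ nested integrals (using $p_1 \le p_2 \le \dots \le p_n$ so the nesting of powers behaves monotonically) converts $\|f\|_{\vc P}$ into a mixed-norm sum over $\ZZ^n$ of sampled suprema, which is finite because it is controlled by the $\ell^{\vc P}_w$-norm over an auxiliary relatively separated set of ``near-maximizers.'' The main obstacle is exactly this passage from cube-averages to cube-suprema: the value $f(\mathbf k/w)$ at the grid point need not be comparable to $\sup_{Q_{\mathbf k}} |f|$, so one must invoke relatively separated sets \emph{other} than $\Gamma_w$ — for each cube pick a point where $|f|$ is within a factor $2$ of its supremum on that cube, observe that the resulting point set is still relatively separated (with a gap depending only on $w$ and $n$), and apply the defining finiteness condition of $\Delta^{\vc P}$ to \emph{that} set. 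Making the ``still relatively separated'' claim precise, with uniform control of $A_\Gamma$ and $B_\Gamma$, is the technical heart of the argument.

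For (ii), properness, it suffices to exhibit a function in $L^{\vc P}(\RR^n)$ that fails the $\Delta^{\vc P}$ condition, i.e. is not even well-defined pointwise on some relatively separated set, or is unbounded near such a set. The simplest choice is an $L^{\vc P}$ function that is unbounded in every neighbourhood of a grid point — for instance $f(\mathbf x) = \sum_{\mathbf k} c_{\mathbf k}\, |\mathbf x - \tfrac{\mathbf k}{w}|^{-\alpha}\chi_{B(\mathbf k/w;\,r)}(\mathbf x)$ with $\alpha$ small enough that each bump lies in $L^{\vc P}$ locally and the coefficients $c_{\mathbf k}$ summable enough for global membership, but with $f(\mathbf k/w) = +\infty$; this $f$ lies in $L^{\vc P}(\RR^n)$ yet its samples on the relatively separated set $\tfrac1w\ZZ^n$ are not finite, so $f \notin \Delta^{\vc P}(\RR^n)$. (Alternatively one can redefine a genuinely $L^{\vc P}$ function on a relatively separated set of measure zero to make the sampled sequence blow up, which shows properness even more cheaply, though it is less natural.) I would present the first, honest example. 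With these three pieces assembled the proposition follows.
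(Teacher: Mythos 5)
Your proposal is correct and, for the inclusion $\Delta^{\vc{P}}(\RR^n)\subseteq L^{\vc{P}}(\RR^n)$, follows essentially the same route as the paper: partition $\RR^n$ into unit cubes, pick in each cube a point where $|f|$ nearly attains its essential supremum, and dominate the iterated integral by the mixed-norm sum of these sampled values. The one technical point where you diverge is how to certify that the near-maximizer set is admissible for the definition of $\Delta^{\vc{P}}$: you propose to verify directly that one point per cube gives uniform bounds on $A_\Gamma$ and $B_\Gamma$ (which is indeed true under the covering/overlap definition as printed), whereas the paper sidesteps this by splitting the near-maximizers into $2^n$ even/odd index subfamilies, each of which is relatively separated, and summing the $2^n$ resulting finite norms; either device closes the gap, and the paper additionally carries an extra additive error term $1/(1+k_1^2+k_2^2)$ that your multiplicative ``within a factor $2$'' choice avoids. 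The more substantive difference is that you actually address properness of the inclusion by exhibiting an $L^{\vc{P}}$ function with integrable singularities at the points of a relatively separated set (hence unbounded, hence outside $M(\RR^n)\supseteq\Delta^{\vc{P}}$); the paper's proof never establishes properness at all, despite the statement asserting it, so your example fills a genuine omission. Your treatment of the seminorm axioms (homogeneity and the triangle inequality inherited from $\ell^{\vc{P}}(\ZZ^n)$, with definiteness failing because a nonzero $f$ can vanish on the chosen sampling set) matches the paper's.
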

\begin{proof}
    We begin by demonstrate the result for $n=2$ and then generalize it for any $n \in \NN$.
    We only need to show that $f \in \Delta^{\overrightarrow{P}}$ implies $\displaystyle \left(\int_{\RR}\left(\int_{\RR} |f(x_1,x_2)|^{p_1}dx_1 \right)^{\frac{p_2}{p_1}}dx_2 \right)^{\frac{1}{p_2}}$ is finite. Since $f$ is bounded, it is possible to find $x_{k_1} ^{*}\in [k_1,k_1+1],y_{k_2}^{*}\in [k_2,k_2+1]$, for each $k_1,k_2 \in \ZZ$, such that 
    \[ \esssup_{x_1 \in [k_1,k_1+1], \, x_2 \in [k_2,k_2+1]} |f(x_1, x_2)| < |f(x_{k_1}^{*},y_{k_2}^{*})| + \frac{1}{1+k_1^2+k_2^2},\]
    and it follows that
    \begin{align*}
        \|f\|_{(p_1,p_2)} & =\Bigg(\sum_{k_2 \in\ZZ}\int_{k_2}^{k_2+1}\Bigg(\sum_{k_1 \in \ZZ} \int_{k_1}^{k_1+1}|f(x_1,x_2)|^{p_1}dx_1\Bigg)^{\frac{p_2}{p_1}}dx_2\Bigg)^{\frac{1}{p_2}}\\
		& < \Bigg( \sum_{k_2 \in \ZZ}\Bigg(\sum_{k_1 \in \ZZ}\Bigg(|f(x_{k_1}^{*},y_{k_2}^{*})|+ \frac{1}{1+k_1^2+k_2^2}\Bigg)^{p_1}\Bigg)^{\frac{p_2}{p_1}}\Bigg)^{\frac{1}{p_2}}\\
		& \leq \Bigg(\sum_{k_2 \in \ZZ}\Bigg( \sum_{k_1 \in \ZZ}|f(x_{k_1}^{*},y_{k_2}^{*})|^{p_1}\Bigg)^{\frac{p_2}{p_1}}\Bigg)^{\frac{1}{p_2}}+ \Bigg(\sum_{k_2 \in \ZZ}\Bigg(\sum_{k_1 \in \ZZ} \Bigg( \frac{1}{1+k_1^2+k_2^2}\Bigg)^{p_1}\Bigg)^{\frac{p_2}{p_1}}\Bigg)^{\frac{1}{p_2}}  \\
		&:= S_1+S_2 \hspace{2pt}.
    \end{align*}
    Indeed, $S_2$ is finite for each $p_1,p_2\geq 1$. Concerning $S_1$, the problem is that $(x_{k_1}^{*},y_{k_2}^{*})_ {(k_1,k_2) \in \ZZ^2}$ may not necessarily be a relatively separated in $\RR^2$, but the subsequences $(x_{2k_1}^{*},y_{2k_2}^{*})_ {(k_1,k_2) \in \ZZ^2}$, $(x_{2k_1+1}^{*},y_{2k_2}^{*})_ {(k_1,k_2) \in \ZZ^2}$, $(x_{2k_1}^{*},y_{2k_2+1}^{*})_ {(k_1,k_2) \in \ZZ^2}$, $(x_{2k_1+1}^{*},y_{2k_2+1}^{*})_ {(k_1,k_2) \in \ZZ^2}$ are relatively separated set in $\RR^2$, and hence for each $f \in \Delta^{\overrightarrow{P}}$, we have
    \begin{align*} 
        &\Bigg( \sum_{k_2 \in \mathbb{Z}}\Bigg( \sum_{k_1 \in \mathbb{Z}} |f(x_{k_1}^*, y_{k_2}^*)|^{p_1} \Bigg)^{\frac{p_2}{p_1}}\Bigg)^{\frac{1}{p_2}}\\
        \leq& \Bigg( \sum_{k_2 \in \mathbb{Z}}\Bigg(\sum_{k_1 \in \mathbb{Z}}  |f(x_{2k_1}^*, y_{2k_2}^*)|^{p_1} \Bigg)^{\frac{p_2}{p_1}}\Bigg)^{\frac{1}{p_2}} + \Bigg(\sum_{k_2 \in \mathbb{Z}}\Bigg(\sum_{k_1 \in \mathbb{Z}}   |f(x_{2k_1}^*, y_{2k_2+1}^*)|^{p_1} \Bigg)^{\frac{p_2}{p_1}}\Bigg)^{\frac{1}{p_2}} \\
        &\quad + \Bigg(\sum_{k_2 \in \mathbb{Z}} \Bigg( \sum_{k_1 \in \mathbb{Z}} |f(x_{2k_1+1}^*, y_{2k_2}^*)|^{p_1} \Bigg)^{\frac{p_2}{p_1}}\Bigg)^{\frac{1}{p_2}} + \Bigg( \sum_{k_2 \in \mathbb{Z}}\Bigg(\sum_{k_1 \in \mathbb{Z}}  |f(x_{2k_1+1}^*, y_{2k_2+1}^*)|^{p_1} \Bigg)^{\frac{p_2}{p_1}}\Bigg)^{\frac{1}{p_2}}\\
        <&\, \infty.
    \end{align*}
    In a similar manner, for $n\in \NN$ there exist $2^n$ subsequent relatively separated sets. Using the same reasoning, since  $f\in \Delta^{\vc{P}}(\RR^n)$, it follows that the all the mixed norm Lebesgue space are finite. Hence, $f\in L^{\vc{P}}(\RR^n)$. 
    For the second part, we provide the proof in the following steps:
    \begin{enumerate}[label=(\roman*), left=0pt]

        \item For any $f\in \Delta^{\vc{P}}(\RR^n)$, we have 
        $\|\alpha f\|_{\Delta^{\vc{P}}}= |\alpha|  \hspace{2pt} \|f\|_{\Delta^{\vc{P}}}$,  
        $ \forall \alpha \in \RR$. 
        
        \item For any $f, g\in \Delta^{\vc{P}}(\RR^n)$ and utilizing triangle inequality, we have 
        $$\|f+g\|_{\Delta^{\vc{P}}} \leq \|f\|_{\Delta^{\vc{P}}}+\|g\|_{\Delta^{\vc{P}}}.$$
    \end{enumerate}
    Therefore, $\|\cdot\|_{\Delta^{\vc{P}}} $ is a seminorm on $\Delta^{\vc{P}}(\RR^n)$. This completes the proof.
\end{proof}

By restricting the allowable functions \( f \) in (\ref{3.3}) to the space \( \Delta^{\overrightarrow{P}} \), the operators \( (S_{w})_{w>0} \) becomes a bounded operator from \( \Delta^{\overrightarrow{P}}  \) into \( L^{\overrightarrow{P}} \).

\begin{thm} \label{interpol}
    The generalized sampling operator $S_{w}:\Delta^{\overrightarrow{P}} \left(\subset  L^{\overrightarrow{P}}(\RR^{n})\right)\rightarrow L^{\overrightarrow{P}}(\RR^{n}) $ satisfies
    \begin{equation*}
		\|S_{w}f\|_{\overrightarrow P}\leq (m_0(\Chi))^{1-\frac{1}{p_n}}\hspace{2pt} \|\Chi\|_1^{\frac{1}{p_{n}}} \left\|f \left( \frac{\textbf{k}}{w}\right) \right\|_{\ell^{\overrightarrow P}_w}.
    \end{equation*}
\end{thm}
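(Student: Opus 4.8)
The plan is to dominate $|S_wf(\textbf{x})|$ pointwise and then unwind the iterated integral defining $\|\cdot\|_{\overrightarrow{P}}$, integrating out $x_1,x_2,\dots,x_n$ one variable at a time and applying H\"older's inequality to the relevant sum at each step. Set $\Psi_0:=|\Chi|$ and, for $1\le i\le n$,
\[
\Psi_i(v_{i+1},\dots,v_n):=\int_{\RR^i}|\Chi(u_1,\dots,u_i,v_{i+1},\dots,v_n)|\,du_1\cdots du_i,
\]
so that $\Psi_n=\|\Chi\|_{(1,\dots,1)}$. Two facts do the work: (a) $\int_{\RR}\Psi_i(\dots,wx_{i+1}-k_{i+1},\dots)\,dx_{i+1}=w^{-1}\Psi_{i+1}(\dots)$, by a linear change of variable; and (b) the uniform bound
\[
\sum_{\textbf{k}\in\ZZ^{n-i}}\Psi_i(w\textbf{x}'-\textbf{k})\le m_0(\Chi),\qquad\text{where }\textbf{x}':=(x_{i+1},\dots,x_n),
\]
which I would prove by writing $\int_{\RR^i}d\textbf{u}=\sum_{j_1,\dots,j_i}\int_{[0,1]^i}d\textbf{u}$, interchanging sum and integral by Tonelli's theorem, and recognising the inner sum as $\sum_{\textbf{k}\in\ZZ^n}|\Chi(\textbf{u}-\textbf{k})|\le m_0(\Chi)$ after a reindexing; for $i=0$ this is precisely \eqref{re2}.

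Starting from $|S_wf(\textbf{x})|\le\sum_{\textbf{k}\in\ZZ^n}|f(\textbf{k}/w)|\,|\Chi(w\textbf{x}-\textbf{k})|$, I split $|\Chi(w\textbf{x}-\textbf{k})|=|\Chi(w\textbf{x}-\textbf{k})|^{1/p_1'}|\Chi(w\textbf{x}-\textbf{k})|^{1/p_1}$ and apply H\"older over $\textbf{k}\in\ZZ^n$ with exponents $p_1',p_1$; by \eqref{re2} this gives $|S_wf(\textbf{x})|^{p_1}\le m_0(\Chi)^{p_1-1}\sum_{\textbf{k}}|f(\textbf{k}/w)|^{p_1}\,|\Chi(w\textbf{x}-\textbf{k})|$. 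Integrating in $x_1$ and regrouping the sum via (a) with $i=0$ yields
\[
\int_{\RR}|S_wf(\textbf{x})|^{p_1}\,dx_1\ \le\ m_0(\Chi)^{p_1-1}\,w^{-1}\sum_{\textbf{k}'\in\ZZ^{n-1}}A^{(1)}_{\textbf{k}'}\,\Psi_1(w\textbf{x}'-\textbf{k}'),\qquad A^{(1)}_{\textbf{k}'}:=\sum_{k_1}|f(\textbf{k}/w)|^{p_1}.
\]
The right-hand side has the same shape in $n-1$ variables, so the scheme iterates. Passing from stage $i$ to stage $i+1$: raise the $i$-fold integral to the power $p_{i+1}/p_i\ge1$, apply H\"older over $\textbf{k}\in\ZZ^{n-i}$ with exponents $(p_{i+1}/p_i)'$ and $p_{i+1}/p_i$, bound the resulting $\Psi_i$-sum by $m_0(\Chi)$ using (b), and integrate in $x_{i+1}$ using (a); the nested power sums update by $A^{(i+1)}_{\textbf{k}'}=\sum_{k_{i+1}}\big(A^{(i)}_{(k_{i+1},\textbf{k}')}\big)^{p_{i+1}/p_i}$.

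After the $n$-th stage one obtains $\|S_wf\|_{\overrightarrow{P}}^{p_n}\le C_n\,w^{-e_n}\,\|\Chi\|_{(1,\dots,1)}\,A^{(n)}$, where $A^{(n)}=\sum_{k_n}\big(\cdots\big(\sum_{k_1}|f(\textbf{k}/w)|^{p_1}\big)^{p_2/p_1}\cdots\big)^{p_n/p_{n-1}}$, the exponent $c_i$ of $m_0(\Chi)$ built up through stage $i$ satisfies $c_{i+1}=(p_{i+1}/p_i)c_i+(p_{i+1}/p_i-1)$ with $c_1=p_1-1$ (hence $c_i=p_i-1$ and $C_n=m_0(\Chi)^{p_n-1}$), and the power of $w$ satisfies $e_{i+1}=(p_{i+1}/p_i)e_i+1$ with $e_1=1$ (hence $e_n=p_n\sum_{j=1}^{n}p_j^{-1}$). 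Taking $p_n$-th roots, $C_n^{1/p_n}=m_0(\Chi)^{1-1/p_n}$ while $w^{-e_n/p_n}\big(A^{(n)}\big)^{1/p_n}=w^{-\sum_j p_j^{-1}}\big(A^{(n)}\big)^{1/p_n}$ is exactly $\big\|f(\textbf{k}/w)\big\|_{\ell^{\overrightarrow{P}}_w}$, which is the asserted inequality. Following the template of Proposition~\ref{p1}, I would present the $n=2$ computation in full and then formalise the above as an induction on $n$.

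The crux is estimate (b): it is the single point where $m_0(\Chi)<\infty$ must be leveraged at every level of the iteration, and it truly requires the Fubini-and-reindex argument, not just $\Psi_i\in L^1(\RR^{n-i})$. Everything else is bookkeeping of the tower of H\"older exponents and the powers of $w$, which telescopes exactly as stated. Degenerate cases cause no difficulty: if $p_i=p_{i+1}$ the corresponding H\"older step is an identity, and if some $p_i=\infty$ one replaces the $x_i$-integration by an essential supremum and reads $1/\infty$ as $0$ throughout.
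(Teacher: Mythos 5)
Your argument is correct and is essentially the paper's own proof: your key estimate (b) is precisely the paper's closing ``observation'' that $\sum_{k_n}\cdots\sum_{k_j}\|\Chi(\cdot,\dots,\cdot,wx_j-k_j,\dots)\|_{(1,\dots,1)}\le m_0(\Chi)$, and your H\"older step with the splitting $|\Chi|=|\Chi|^{1/p'}|\Chi|^{1/p}$ yields the identical intermediate inequality that the paper obtains via Jensen's inequality with the normalized weights $\Chi/m_0(\Chi)$. The only difference is presentational: you make the induction and the exponent recursions ($c_i=p_i-1$, $e_n=p_n\sum_j p_j^{-1}$) explicit, where the paper works out $n=2$ and asserts the general case ``proceeding in a similar manner.''
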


\begin{proof}
    To begin, we verifying the result for $n=2.$ Consider $ \overrightarrow{P}=(p_1,p_2) \in [1,\infty)^2$ and $ f \in L^{\overrightarrow{P}}(\RR^{2}).$
    From convexity property on the summation and Jensen's inequality on integration, we get 
    \begin{align*}
		\int_{\RR} \bigg|\sum_{k_2 \in \ZZ}\sum_{k_1 \in \ZZ}  &\Chi(wx_1-k_1,wx_2-k_2) f\left(\frac{k_1}{w},\frac{k_2}{w}\right)\bigg|^{p_1}dx_1\\
		\leq &\,(m_{0}(\Chi))^{p_1}\int_{\RR} \sum_{k_2 \in \ZZ}\sum_{k_1 \in \ZZ} \frac{\Chi(wx_1-k_1,wx_2-k_2)}{m_{0}(\Chi)} \left| f\left(\frac{k_1}{w},\frac{k_2}{w}\right)\right|^{p_1} dx_1\\
		\leq &\, \frac{(m_{0}(\Chi))^{p_1-1}}{w}\sum_{k_2 \in \ZZ}\sum_{k_1 \in \ZZ} \|\Chi(\cdot,wx_2-k_2)\|_1 \left| f\left(\frac{k_1}{w},\frac{k_2}{w}\right)\right|^{p_1}.
    \end{align*}
    Since $p_1\leq p_2$, therefore using the convexity and Jensen's inequality on integration, we get
    \begin{align*}
		&\|S_wf\|_{(p_1,p_2)}^{p_2}\\
		\leq&\, m_{0}(\Chi)^{p_2-\frac{p_2}{p_1}} \int_{\RR} \bigg(\sum_{k_2 \in \ZZ} \|\Chi(\cdot,wx_{2}-k_2)\|_1\bigg)^{\frac{p_2}{p_1}} \\
		&\hspace{1.5in}\times  \bigg(\frac{1}{w} \sum_{k_2 \in \ZZ}\frac{\|\Chi(\cdot,wx_{2}-k_2)\|_1}{\sum_{k_2 \in \ZZ} \|\Chi(\cdot,wx_{2}-k_2)\|_1} \sum_{k_1 \in \ZZ}\left| f\left(\frac{k_1}{w},\frac{k_2}{w}\right)\right|^{p_1}\bigg)^{\frac{p_2}{p_1}}\, dx_2\\
		\leq &\, m_{0}(\Chi)^{p_2-\frac{p_2}{p_1}}\int_{\RR} \bigg(\sum_{k_2 \in \ZZ} \|\Chi(\cdot,wx_{2}-k_2)\|_1\bigg)^{\frac{p_2}{p_1}-1}\\
		&\hspace{1.5in}\times \sum_{k_2 \in \ZZ}\|\Chi(\cdot,wx_{2}-k_2)\|_1 \bigg( \frac{1}{w}\sum_{k_1 \in \ZZ}\left| f\left(\frac{k_1}{w},\frac{k_2}{w}\right)\right|^{p_1}\bigg)^{\frac{p_2}{p_1}}\, dx_2.
    \end{align*}
    Since the absolute moment of the kernel $\Chi$ of order $0$ is finite. Therefore, for every $1\leq j\leq n$, we have
    \begin{align*}
    	&\sum_{k_{n} \in \ZZ} \cdots \sum_{k_j \in \ZZ} \|\Chi(\cdot,\cdots,\cdot,wx_{j}-k_j,\dots, wx_n-k_n)\|_{1}\\
    	\leq& \int_{[0,1]^{j-1}} \sum_{\textbf{k}\in \ZZ^n} |\Chi(x_1-k_1,x_2-k_2,\dots,wx_n-k_n)|dx_1dx_2\cdots dx_{j-1}\\
    	\leq& \sup_{\textbf{x}\in \RR^n} \sum_{\textbf{k}\in \ZZ^n} |\Chi(\textbf{x}-\textbf{k})|=m_0(\Chi).
    \end{align*}
    
    Hence,
    \begin{align*}
		\|S_wf\|_{(p_1,p_2)}^{p_2}\leq &\, \frac{m_0(\Chi)^{p_2-1}}{w}\|\Chi\|_{1}\sum_{k_2\in \ZZ}  \bigg( \frac{1}{w}\sum_{k_1 \in \ZZ}\left| f\left(\frac{k_1}{w},\frac{k_2}{w}\right)\right|^{p_1}\bigg)^{\frac{p_2}{p_1}}\, \\
		\leq & \,m_0(\Chi)^{p_2-1}\|\Chi\|_{1} \left\|f \left(\frac{k_1}{w},\frac{k_2}{w}\right)\right\|_{\ell^{\overrightarrow P}_w}^{p_2}\\
		\|S_wf\|_{(p_1,p_2)}\leq & (m_{0}(\Chi))^{1-\frac{1}{p_2}} \|\Chi\|_{1}^{\frac{1}{p_2}} \left\|f \left(\frac{k_1}{w},\frac{k_2}{w}\right)\right\|_{\ell^{\overrightarrow P}_w}.
    \end{align*}
    Now following the similar line of proof for $\vc{P}=(p_1,p_2,\dots,p_n)$, we get
    \begin{flalign*}
		\|S_{w}f\|_{\overrightarrow P}\leq (m_0(\Chi))^{1-\frac{1}{p_n}}\hspace{2pt} \|\Chi\|_1^{\frac{1}{p_{n}}} \left\|f\left(\frac{\textbf{k}}{w}\right) \right\|_{\ell^{\overrightarrow P}_w}.
    \end{flalign*}
    This completes the proof.
\end{proof}


In the next section, we will discuss the well-known family $(K_w)_{w>0}$ known as Kantorovich-type sampling operators. These operators plays a significant role in the approximation of not necessarily continuous signals, with particular importance in multivariate contexts. Moreover, this family of operators has been proven to be especially effective in the field of image processing.

\section{Kantorovich-type sampling operators on mixed norm Lebesgue space} \label{4}
In this section, we investigate the boundedness and convergence of Kantorovich-type sampling operators within mixed norm Lebesgue space. It is noteworthy that the convergence of $(K_w)_{w>0}$ follows from the fact that $C_{c}(\RR^n)$ is dense in $L^{\overrightarrow{P}}(\RR^{n})$. First we proceed to demonstrate that for a fixed $w>0,$ the operator $K_{w}: L^{\overrightarrow{P}}(\RR^{n})\rightarrow L^{\overrightarrow{P}}(\RR^{n}) $ is a bounded linear operator.

\begin{defin}  
    Let $f:\RR^{n}\rightarrow\RR$ be a locally integrable function. For $\textbf{x} \in \RR^n$, the Kantorovich-type sampling series associated with $f$ is given by \cite{multi}  
    \begin{equation}\label{kan}
        K_{w}(f)(\textbf{x})=\sum_{\textbf{k}\in \ZZ^n}\Chi(w\textbf{x}-\textbf{k}) \hspace{3pt} w^{n} \int_{I_{\textbf{k},w}} f(\textbf{t})d\textbf{t} 
    \end{equation}
    where $I_{\textbf{k},w}=\prod\limits_{j=1}^{n}\left[\frac{k_j}{w},\frac{k_j+1}{w}\right].$
\end{defin} 


\begin{thm} \label{thm 4.2}
    The Kantorovich-type sampling operator $K_{w}: L^{\overrightarrow{P}}(\RR^{n})\rightarrow L^{\overrightarrow{P}}(\RR^{n}) $ satisfies 
    \begin{equation*}
		\|K_{w}f\|_{\overrightarrow{P}}\leq (m_0(\Chi))^{1-\frac{1}{p_n}}\hspace{2pt} \|\Chi\|_1^{\frac{1}{p_{n}}}\hspace{2pt} \left\|f \right\|_{\overrightarrow{P}}.
    \end{equation*}
\end{thm}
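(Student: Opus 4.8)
The strategy is to mimic the argument already used for Theorem \ref{interpol}, replacing the discrete sample values $f(\textbf{k}/w)$ by the local averages $a_{\textbf{k},w}(f) := w^{n}\int_{I_{\textbf{k},w}} f(\textbf{t})\, d\textbf{t}$, and then to bound the $\ell^{\overrightarrow{P}}_w$-norm of the sequence $(a_{\textbf{k},w}(f))_{\textbf{k}\in\ZZ^n}$ by $\|f\|_{\overrightarrow{P}}$. The first observation is purely formal: $K_w f = S_w(g)$ in the sense that $K_w$ has exactly the same structure as the generalized sampling operator but with the ``sample sequence'' $(a_{\textbf{k},w}(f))$ in place of $(f(\textbf{k}/w))$. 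Indeed, re-reading the proof of Theorem \ref{interpol}, the only properties of the sequence $(f(\textbf{k}/w))$ that were used are the convexity/Jensen steps on the kernel together with the $0$-th order moment bound; those steps go through verbatim with any real sequence indexed by $\ZZ^n$. Hence we immediately get
\[
\|K_w f\|_{\overrightarrow{P}} \leq (m_0(\Chi))^{1-\frac{1}{p_n}}\, \|\Chi\|_{(1,\dots,1)}^{\frac{1}{p_n}}\, \big\| a_{\textbf{k},w}(f) \big\|_{\ell^{\overrightarrow{P}}_w}.
\]

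The remaining and genuine work is therefore to prove the ``averaging inequality''
\[
\big\| a_{\textbf{k},w}(f) \big\|_{\ell^{\overrightarrow{P}}_w} \leq \|f\|_{\overrightarrow{P}},
\]
i.e. that taking cell-averages on the grid $\tfrac1w\ZZ^n$ is a contraction from $L^{\overrightarrow{P}}(\RR^n)$ into the weighted sequence space $\ell^{\overrightarrow{P}}_w$. I would prove this iteratively, one variable at a time, exactly paralleling the nested structure of the mixed norm. For the innermost variable, for fixed $x_2,\dots,x_n$, Jensen's inequality (applied to the convex function $t\mapsto t^{p_1}$ and the probability measure $w\,dx_1$ on $[k_1/w,(k_1+1)/w]$) gives
\[
\Big| w\!\int_{k_1/w}^{(k_1+1)/w}\!\! f\, dx_1 \Big|^{p_1} \le w\!\int_{k_1/w}^{(k_1+1)/w}\!\! |f|^{p_1} dx_1,
\]
so that $\tfrac1w\sum_{k_1}|a_{\textbf k,w}(f)|^{p_1}\le \int_\RR |f|^{p_1} dx_1$. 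One then applies the same one-dimensional averaging-is-a-contraction estimate successively in the $x_2$-direction with exponent $p_2/p_1\ge 1$, in the $x_3$-direction with exponent $p_3/p_2\ge1$, and so on; the monotonicity $p_1\le p_2\le\cdots\le p_n$ is exactly what makes each of these exponents at least $1$, so Jensen applies at every stage. Collapsing the telescoping chain of inequalities yields $\big\| a_{\textbf{k},w}(f) \big\|_{\ell^{\overrightarrow{P}}_w}\le \|f\|_{\overrightarrow{P}}$, and combining with the displayed bound finishes the proof; linearity of $K_w$ is immediate from linearity of the integral and of the sum.

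I would present it, as the authors do elsewhere, first for $n=2$ to fix notation and then remark that the general case follows by the same $n$-fold iteration. The main obstacle — really the only subtle point — is keeping the nested $L^{\overrightarrow P}$/$\ell^{\overrightarrow P}_w$ bookkeeping correct: one must be careful that at the $j$-th stage the ``outer'' variables $x_{j+1},\dots,x_n$ are held fixed, that the quantity being raised to the power $p_{j+1}/p_j$ is itself an $\ell^{p_j}$-type sum over $k_1,\dots,k_j$, and that the factor $1/w$ per summation index is matched correctly against the Lebesgue measure; the interchange of $\sum_{k_j}$ and $\int_{\RR}\,dx_{j+1}$ after applying Jensen is justified by Tonelli since everything is nonnegative. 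A minor technical caveat is that, as in Proposition \ref{p1}, one should note the averages $a_{\textbf{k},w}(f)$ are finite for a.e. cell when $f\in L^{\overrightarrow P}$, which again follows from the one-dimensional contraction estimate applied iteratively.
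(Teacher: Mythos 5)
Your proof is correct, and it reaches the stated constant, but it is organized differently from the paper's. The paper does not factor $K_w$ through the generalized sampling bound: it redoes the whole computation from scratch (for $n=2$, then iterates), interleaving at each stage the convexity/Jensen step on the kernel sum with weights $\Chi/m_0(\Chi)$ with a Jensen step on the cell average $w^2\int_{I_{\textbf{k},w}}|f|$, so that after the inner estimate one has $\sum_{k_2}\|\Chi(\cdot,wx_2-k_2)\|_1\int_{k_2/w}^{(k_2+1)/w}\int_{\RR}w|f|^{p_1}\,dt_1dt_2$, and the outer Jensen step then reconstitutes $\|f\|_{(p_1,p_2)}^{p_2}$ directly. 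Your version cleanly separates the two ingredients: (i) the purely kernel-dependent bound $\|S_w(c)\|_{\overrightarrow{P}}\leq (m_0(\Chi))^{1-\frac{1}{p_n}}\|\Chi\|_{(1,\dots,1)}^{\frac{1}{p_n}}\|c\|_{\ell^{\overrightarrow{P}}_w}$, valid for an arbitrary sequence $c$ — and you are right that the proof of Theorem \ref{interpol} uses nothing else about $(f(\textbf{k}/w))$ — and (ii) the averaging contraction $\|a_{\textbf{k},w}(f)\|_{\ell^{\overrightarrow{P}}_w}\leq\|f\|_{\overrightarrow{P}}$, proved by iterated Jensen, where the monotonicity $p_1\leq\cdots\leq p_n$ enters exactly as you say. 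The two routes rest on the same inequalities; yours is more modular and makes transparent why the constant coincides with that of Theorem \ref{interpol}, while the paper's is self-contained at the cost of repeating the kernel estimates. One presentational caveat: your displayed inner inequality $\frac{1}{w}\sum_{k_1}|a_{\textbf{k},w}(f)|^{p_1}\leq\int_{\RR}|f|^{p_1}\,dx_1$ is written as if $a_{\textbf{k},w}(f)$ were a one-dimensional average; since it is the average over the full cell, the right-hand side should still carry the average over the remaining coordinates of $I_{\textbf{k},w}$, which is then absorbed at the subsequent stages of the iteration exactly as you describe — worth stating precisely when writing this up.
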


\begin{proof}
    We first examine this result for $n=2,$ which will further be extend to any $n \in \NN.$ Now using (\ref{kan}), we obtain
    \begin{align*} 
        &\|K_wf\|_{(p_1,p_2)}^{p_2}\\	
		=&\int_{\RR}\bigg(\int_{\RR} \bigg| \sum_{k_2 \in \ZZ}\sum_{k_1 \in \ZZ} \Chi(wx_1-k_1,wx_2-k_2) w^{2} \int_{\frac{k_2}{w}}^{\frac{k_2+1}{w}}\int_{\frac{k_1}{w}}^{\frac{k_1+1}{w}}f(t_1,t_2)dt_1dt_2\bigg|^{p_1}dx_1\bigg)^{\frac{p_2}{p_1}}dx_2.
    \end{align*}
    From convexity property on the summation and Jensen's inequality on integration, we get 
    \begin{align*}
        &\int_{\RR} \bigg|\sum_{k_2 \in \ZZ}\sum_{k_1 \in \ZZ}  \Chi(wx_1-k_1,wx_2-k_2) w^{2} \int_{\frac{k_2}{w}}^{\frac{k_2+1}{w}} \int_{\frac{k_1}{w}}^{\frac{k_1+1}{w}}|f(t_1,t_2)|dt_1dt_2\bigg|^{p_1}dx_1\\
        \leq &(m_{0}(\Chi))^{p_1}\int_{\RR} \sum_{k_2 \in \ZZ}\sum_{k_1 \in \ZZ} \frac{\Chi(wx_1-k_1,wx_2-k_2)}{m_{0}(\Chi)} \Big( w^{2} \int_{\frac{k_2}{w}}^{\frac{k_2+1}{w}} \int_{\frac{k_1}{w}}^{\frac{k_1+1}{w}}|f(t_1,t_2)|dt_1dt_2 \Big)^{p_1}\\
        \leq &(m_{0}(\Chi))^{p_1-1}\sum_{k_2 \in \ZZ}\sum_{k_1 \in \ZZ} \|\Chi(\cdot,wx_2-k_2)\|_1 \int_{\frac{k_2}{w}}^{\frac{k_2+1}{w}}\int_{\frac{k_1}{w}}^{\frac{k_1+1}{w}} w|f(t_1,t_2)|^{p_1}dt_1dt_2\\
        \leq &(m_{0}(\Chi))^{p_1-1}\sum_{k_2 \in \ZZ} \|\Chi(\cdot,wx_2-k_2)\|_1 \int_{\frac{k_2}{w}}^{\frac{k_2+1}{w}}\int_{\RR} w|f(t_1,t_2)|^{p_1}dt_1dt_2.
    \end{align*}
    Since $p_1\leq p_2$, therefore using the convexity and Jensen's inequality on integration, we get
    \begin{align*}
		&\|K_wf\|_{(p_1,p_2)}^{p_2}\\
		\leq&\, m_{0}(\Chi)^{p_2-\frac{p_2}{p_1}}\int_{\RR} \bigg(\sum_{k_2 \in \ZZ} \|\Chi(\cdot,wx_{2}-k_2)\|_1\bigg)^{\frac{p_2}{p_1}} \\
		&\hspace{1.5in}\times \bigg( \sum_{k_2 \in \ZZ}\frac{\|\Chi(\cdot,wx_{2}-k_2)\|_1}{\sum_{k_2 \in \ZZ} \|\Chi(\cdot,wx_{2}-k_2)\|_1} w \int_{\frac{k_2}{w}}^{\frac{k_2+1}{w}} \int_{\RR} |f(t_1,t_2)|^{p_1}dt_1 dt_2\bigg)^{\frac{p_2}{p_1}}\, dx_2\\
		\leq &\, m_{0}(\Chi)^{p_2-\frac{p_2}{p_1}}\int_{\RR} \bigg(\sum_{k_2 \in \ZZ} \|\Chi(\cdot,wx_{2}-k_2)\|_1\bigg)^{\frac{p_2}{p_1}-1} \\
		&\hspace{1.5in} \times\sum_{k_2 \in \ZZ}\|\Chi(\cdot,wx_{2}-k_2)\|_1 \bigg( w \int_{\frac{k_2}{w}}^{\frac{k_2+1}{w}} \int_{\RR} |f(t_1,t_2)|^{p_1}dt_1 dt_2 \bigg)^{\frac{p_2}{p_1}}\, dx_2\\
		\leq &\, m_0(\Chi)^{p_2-1}\|\Chi\|_{1}\sum_{k_2\in \ZZ} \int_{\frac{k_2}{w}}^{\frac{k_2+1}{w}} \bigg( \int_{\RR} |f(t_1,t_2)|^{p_1}dt_1 \bigg)^{\frac{p_2}{p_1}}\, dt_2\\
		\leq & \,m_0(\Chi)^{p_2-1}\|\Chi\|_{1}\|f\|_{(p_1,p_2)}^{p_2}.
    \end{align*}
    This gives $$\|K_wf\|_{(p_1,p_2)}\leq (m_{0}(\Chi))^{1-\frac{1}{p_2}} \|\Chi\|_{1}^{\frac{1}{p_2}}\|f\|_{(p_1,p_2)}.$$ Proceeding in a similar manner, we obtain
    \begin{flalign*}
        \|K_{w}f\|_{\overrightarrow{P}}& \leq (m_0(\Chi))^{1-\frac{1}{p_n}}\|\Chi\|_1^{\frac{1}{p_{n}}}\|f \|_{\overrightarrow{P}}.
    \end{flalign*}
\end{proof}

For a given $f\in L^{\vc{P}}(\RR^n)$, the Kantorovich-type sampling operator $K_w$ acting on $f$ approximates the function $f$ in $L^{\vc{P}}(\RR^n)$. In order to prove this result, we first observe that $C_c(\RR^n)$ is dense in $L^{\vc{P}}(\RR^n)$ and $K_wf\to f$ as $w\to \infty$ for every $f\in C_c(\RR^n)\subset L^{\vc{P}}(\RR^n)$.
In order to prove the convergence result in $L^{\overrightarrow{P}}(\RR^n)$, the following lemma on the absolute continuity of $\Chi$ plays a key role.

\begin{lemma} \label{cc}
Let $\Chi$ be the kernel. For every ${c}>0$ and $\epsilon >0$, there exists a constant $L>0$ such that
	\begin{equation*}
	\displaystyle \int_{\|\textbf{x}\|_{\infty}>{L}}w^n|\Chi(w\textbf{x}-\textbf{k})|d\textbf{x} < \epsilon  
	\end{equation*}
	for sufficiently large $w>0$ and  $\textbf{k} \in \ZZ^n$ such that $\displaystyle {\|\textbf{k}\|_{\infty}} \leq  wc$.
\end{lemma}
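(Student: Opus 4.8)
The plan is to reduce the statement to the absolute continuity of the Lebesgue integral of the fixed $L^1$-function $\Chi$, by means of the affine substitution $\textbf{y}=w\textbf{x}-\textbf{k}$. First I would carry out this change of variables: since $d\textbf{y}=w^n\,d\textbf{x}$ and $\|\textbf{x}\|_{\infty}>L$ is equivalent to $\|\textbf{y}+\textbf{k}\|_{\infty}>wL$, we obtain
\[
\int_{\|\textbf{x}\|_{\infty}>L}w^n|\Chi(w\textbf{x}-\textbf{k})|\,d\textbf{x}=\int_{\|\textbf{y}+\textbf{k}\|_{\infty}>wL}|\Chi(\textbf{y})|\,d\textbf{y}.
\]
Next, using the triangle inequality for $\|\cdot\|_{\infty}$ together with the hypothesis $\|\textbf{k}\|_{\infty}\le wc$, I would note that $\|\textbf{y}+\textbf{k}\|_{\infty}>wL$ forces $\|\textbf{y}\|_{\infty}\ge\|\textbf{y}+\textbf{k}\|_{\infty}-\|\textbf{k}\|_{\infty}>wL-wc=w(L-c)$. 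Hence the region of integration is contained in $\{\textbf{y}\in\RR^n:\|\textbf{y}\|_{\infty}>w(L-c)\}$, which gives
\[
\int_{\|\textbf{x}\|_{\infty}>L}w^n|\Chi(w\textbf{x}-\textbf{k})|\,d\textbf{x}\le\int_{\|\textbf{y}\|_{\infty}>w(L-c)}|\Chi(\textbf{y})|\,d\textbf{y},
\]
and the right-hand side no longer depends on $\textbf{k}$.

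To conclude, I would fix any $L>c$ (for definiteness $L=c+1$). Since $\Chi\in L^1(\RR^n)$, the tail integrals $\int_{\|\textbf{y}\|_{\infty}>R}|\Chi(\textbf{y})|\,d\textbf{y}$ tend to $0$ as $R\to\infty$; pick $R_0>0$ with $\int_{\|\textbf{y}\|_{\infty}>R_0}|\Chi(\textbf{y})|\,d\textbf{y}<\epsilon$. Then for every $w>R_0/(L-c)$ we have $w(L-c)>R_0$, so the displayed estimate yields $\int_{\|\textbf{x}\|_{\infty}>L}w^n|\Chi(w\textbf{x}-\textbf{k})|\,d\textbf{x}<\epsilon$ for all such $w$ and all $\textbf{k}\in\ZZ^n$ with $\|\textbf{k}\|_{\infty}\le wc$, which is exactly the claim.

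There is no serious obstacle here: the substitution is a routine affine change of variables, and the decay of the tails is just integrability of $\Chi$. The only point that deserves a sentence of care is the uniformity in $\textbf{k}$ — this is automatic because, after the change of variables and the triangle-inequality bound, the estimate only involves the single function $\Chi$ and the scalar radius $w(L-c)$, with the dependence on $\textbf{k}$ having been absorbed into the inclusion of integration domains.
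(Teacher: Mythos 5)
Your proof is correct and follows the same route as the paper's (which only sketches the argument and defers to the reference \cite{multi}): choose $M$ from the integrability of $\Chi$, change variables, and use the triangle inequality to reduce to a tail integral over $\|\textbf{y}\|_{\infty}>w(L-c)$. Your write-up is in fact cleaner on the quantifier order — fixing $L>c$ first and then letting $w$ be large — which matches what the lemma statement actually requires.
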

	\begin{proof}
		Since $\Chi \in L^1(\RR^n)$, there exists a constant $M>0$ such that  
		\begin{equation*}
\int_{{\|\textbf{u}\|_{\infty}}>M }|\Chi(\textbf{u})|d\textbf{u}  <\epsilon.
\end{equation*}
For $c>0, w \geq 1$, consider $\textbf{k}$ such that $\displaystyle {\|\textbf{k}\|_{\infty}} \leq wc$. Further we can  also find $L>0$ such that $w(L-c) >M$. The detailed proof can be found in \cite{multi}.
\end{proof}

In the following theorem, we demonstrate the convergence of the Kantorovich-type sampling operator in $C_{c}(\RR^{n})$ within the framework of mixed norm Lebesgue space.
\begin{thm} \label{thm 4.3}
	For $f\in C_{c}(\RR^{n})$, it holds that
	\begin{align*}
		\lim_{w\to\infty}\|K_{w}f-f\|_{\overrightarrow P}=0.
		\end{align*}
	\end{thm}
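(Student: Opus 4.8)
The plan is to prove convergence for $f \in C_c(\RR^n)$ by splitting $K_w f - f$ pointwise into an "oscillation" term and a "tail" term, then control the mixed norm of each. Since $f$ is compactly supported, say $\mathrm{supp}(f) \subseteq [-c,c]^n$, I would first fix $\textbf{x}$ and use the partition-of-unity property \eqref{re1} to write
\[
K_w(f)(\textbf{x}) - f(\textbf{x}) = \sum_{\textbf{k}\in\ZZ^n} \Chi(w\textbf{x}-\textbf{k})\, w^n\!\int_{I_{\textbf{k},w}} \big(f(\textbf{t}) - f(\textbf{x})\big)\, d\textbf{t}.
\]
Given $\epsilon > 0$, uniform continuity of $f$ yields $\delta > 0$ with $|f(\textbf{t}) - f(\textbf{x})| < \epsilon$ whenever $\|\textbf{t}-\textbf{x}\|_\infty < \delta$. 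Split the sum over $\textbf{k}$ according to whether $I_{\textbf{k},w}$ is within $\delta$ of $\textbf{x}$ or not; for $w$ large the diameter of $I_{\textbf{k},w}$ is less than $\delta/2$, so the "close" indices contribute at most $\epsilon\, m_0(\Chi)$ uniformly in $\textbf{x}$, while for the "far" indices one uses $|f(\textbf{t}) - f(\textbf{x})| \le 2\|f\|_\infty$ together with Lemma \ref{cc} to make that part small.

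Next I would convert these pointwise bounds into a mixed-norm estimate. The subtlety is that $K_w f - f$ need not be compactly supported, but it decays: since $f$ is supported in $[-c,c]^n$, for $\|\textbf{x}\|_\infty$ large only the kernel tail survives, and one shows $\|K_w f - f\|_{\overrightarrow{P}}$ restricted to $\{\|\textbf{x}\|_\infty > R\}$ is small uniformly in $w$ by combining Theorem \ref{thm 4.2} (boundedness of $K_w$) applied to $f$ localized far out — or more directly by noting $\|K_w f - f\|_{\overrightarrow{P}} \le \|K_w f\|_{\overrightarrow P} + \|f\|_{\overrightarrow P}$ and that both $K_w f$ and $f$ have mixed norm controlled by $\|f\|_{\overrightarrow P}$, so it suffices to get smallness on a fixed large box $[-R,R]^n$. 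On that box, the pointwise bound $|K_w f(\textbf{x}) - f(\textbf{x})| \le \epsilon\, m_0(\Chi) + 2\|f\|_\infty \cdot (\text{something small via Lemma \ref{cc}})$ holds for all $\textbf{x}$ once $w$ is large, so the mixed norm over $[-R,R]^n$ is at most $\big(\epsilon m_0(\Chi) + o(1)\big)$ times the mixed norm of $\chi_{[-R,R]^n}$, which is a finite constant depending only on $R$ and $\overrightarrow P$.

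More carefully, for the tail in $\textbf{x}$: on $\{\|\textbf{x}\|_\infty > R\}$ with $R > c+1$, we have $f(\textbf{x}) = 0$, so there $K_w f(\textbf{x}) - f(\textbf{x}) = K_w f(\textbf{x})$, and since the nonzero terms $w^n\int_{I_{\textbf{k},w}} f$ require $\|\textbf{k}\|_\infty \lesssim wc$, Lemma \ref{cc} gives $\int_{\|\textbf{x}\|_\infty > R} w^n|\Chi(w\textbf{x}-\textbf{k})|\,d\textbf{x} < \epsilon$ for large $w$; feeding this through a convexity/Jensen argument exactly as in the proof of Theorem \ref{thm 4.2} (but integrating only over $\|\textbf{x}\|_\infty > R$) bounds the tail contribution to $\|K_w f\|_{\overrightarrow P}$ by something like $C\,\epsilon^{1/p_n}\|f\|_{\overrightarrow P}$. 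Then choose first $R$, then $\delta$, then $w$ large; adding the box and tail estimates gives $\|K_w f - f\|_{\overrightarrow P} < \epsilon'$ for all large $w$.

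I expect the main obstacle to be handling the non-compact support of $K_w f - f$ cleanly within the iterated mixed norm — in a single $L^p$ norm the tail-in-$\textbf{x}$ argument is routine, but here one must verify that restricting each successive integral $\int_{\RR} \cdots d\mu_j$ to a large interval and applying Lemma \ref{cc} at the innermost stage propagates correctly through the nested powers $p_2/p_1, p_3/p_2, \dots$, which requires repeating the Jensen/convexity chain from Theorem \ref{thm 4.2} with the domain of the outer integrals split into a bounded box and its complement. Once that bookkeeping is set up for $n=2$, the general $n$ case follows by the same induction used throughout Sections \ref{3} and \ref{4}.
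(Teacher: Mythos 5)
Your proposal is essentially correct, but it follows a different organizational route from the paper. The paper proves Theorem \ref{thm 4.3} via the Vitali convergence theorem for mixed norm spaces: it (i) quotes the sup-norm convergence $\|K_wf-f\|_\infty\to 0$ from the multivariate Kantorovich literature, (ii) shows the mixed norms of $K_wf$ over the complement of a large box $[-M,M]^2$ are uniformly small (splitting into the three regions $I_1,I_2,I_3$ according to which coordinate is large, and pushing Lemma \ref{cc} through the Jensen/convexity chain of Theorem \ref{thm 4.2}), and (iii) verifies equi-absolute continuity of the integrals via the absolute continuity of $\|\Chi\|_{L^{(1,1)}(B_1\times B_2)}$. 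You instead make the direct estimate $\|K_wf-f\|_{\vc{P}}\le\|(K_wf-f)\chi_{[-R,R]^n}\|_{\vc{P}}+\|K_wf\|_{L^{\vc{P}}(([-R,R]^n)^c)}$, controlling the first term by $\|K_wf-f\|_\infty\cdot\|\chi_{[-R,R]^n}\|_{\vc{P}}$ and the second by exactly the paper's step (ii). This buys you a proof with no appeal to Vitali and no equi-absolute-continuity step, at the cost of having to carry out the same bookkeeping for the tail estimate; the genuinely hard part (propagating Lemma \ref{cc} through the nested exponents $p_2/p_1,\dots$) is common to both arguments, and your description of it matches what the paper actually does.

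One point needs repair. In your pointwise oscillation step you invoke Lemma \ref{cc} to handle the ``far'' indices, i.e.\ to bound $\sum_{\|w\textbf{x}-\textbf{k}\|_\infty>w\delta/2}|\Chi(w\textbf{x}-\textbf{k})|$. Lemma \ref{cc} says nothing about such discrete sums: it controls the integrals $\int_{\|\textbf{x}\|_\infty>L}w^n|\Chi(w\textbf{x}-\textbf{k})|\,d\textbf{x}$. What you actually need there is the discrete tail property $\lim_{r\to\infty}\sup_{\textbf{u}}\sum_{\|\textbf{u}-\textbf{k}\|_\infty>r}|\Chi(\textbf{u}-\textbf{k})|=0$, which does not follow from $m_0(\Chi)<\infty$ alone and is a separate kernel hypothesis in the cited literature. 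The cleanest fix is to do what the paper does and simply cite the uniform convergence $\|K_wf-f\|_\infty\to 0$ for $f\in C_c(\RR^n)$ from the multivariate Kantorovich results, rather than re-deriving it; with that substitution your argument goes through.
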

\begin{proof}
We proceed to analyze the result for $n=2$, i.e., for $f \in  C_{c}(\RR^{2})$, and afterwards we will generalize it for any $n \in \NN.$  We will prove that	$$\lim_{w\to\infty}\|K_{w}f-f\|_{(p_1,p_2)}=0.$$ We will be using the \emph{Vitali Convergence theorem} in mixed norm Lebesgue space \cite{vitali}. We outline the proof in the following steps:\par

\textbf{Step 1.} In view of  \cite[Theorem 4.1]{multi}, we have	$$\lim_{w\to\infty}\|K_{w}f-f\|_{\infty}=0.$$\par 

\textbf{Step 2.}  As $f\in C_c(\RR^n)$, there exists $a>0$ such that $\text{supp}\, f \subseteq [- a,  a]^2$. 
For every sufficiently large $w>0$,   whenever $(k_1,k_2) \notin [-wa-1,wa]^2$, we have $\left\lbrace [\frac{k_1}{w},\frac{k_1+1}{w}] \times [\frac{k_2}{w},\frac{k_2+1}{w}]\right\rbrace \cap [- a, a]^2=\emptyset$.
This gives
\begin{equation*} \label{4.4}
    \int_{\frac{k_2}{w}}^{\frac{k_2+1}{w}}	\int_{\frac{k_1}{w}}^{\frac{k_1+1}{w}}|f(t_1,t_2)|dt_1dt_2=0.
\end{equation*} 
By Lemma \ref{cc}, for every fixed $\epsilon >0$, there exists a constant $M>0$ (we can assume $M >  a$) such that 
\begin{equation*}
    \int_{\|\textbf{x}\|_{\infty}>{M}}w^n|\Chi(w\textbf{x}-\textbf{k})|d\textbf{x} < \epsilon
\end{equation*}
for every $(k_1,k_2) \in [-wa-1,wa]^2$. For a set $\textbf{G}=[-M,M]^2$, we compute 
\begin{align}\label{j}
    I &= \|K_wf\|^{p_2}_{L^{(p_1,p_2)}(\textbf{G}^c)} \nonumber\\
    &\leq \bigg( \int_{|x_2|>M}\bigg(\int_{|x_1|\leq M}|(K_{w}f)(x_1,x_2)|^{p_1}dx_1\bigg)^{\frac{p_2}{p_1}}dx_2\bigg)\nonumber\\
    & \hspace{1in}+\bigg(\int_{|x_2|\leq M}\bigg(\int_{|x_1|>M}|(K_{w}f)(x_1,x_2)|^{p_1}dx_1\bigg)^{\frac{p_2}{p_1}}dx_2\bigg)\nonumber\\
    & \hspace{2in}+\bigg(\int_{|x_2|>M}\bigg(\int_{|x_1|>M}|(K_{w}f)(x_1,x_2)|^{p_1}dx_1\bigg)^{\frac{p_2}{p_1}}dx_2\bigg)\nonumber\\
    &:= I_1 +I_2 +I_3.
\end{align}
First, we compute $I_1$. Utilizing Jensen's inequality twice and Fubini-Tonelli theorem, we have 
\begin{flalign*}
    &\int_{|x_1|\leq M}\Bigg(  \sum_{\frac{k_2}{w} \in [-a-\frac{1}{w}, a]}\sum_{\frac{k_1}{w}\in  [-a-\frac{1}{w}, a]}|\Chi(wx_1-k_1,wx_2-k_2)|w^2\int_{\frac{k_2}{w}}^{\frac{k_2+1}{w}}\int_{\frac{k_1}{w}}^{\frac{k_1+1}{w}}|f(t_1,t_2)|dt_1dt_2\Bigg)^{p_1}dx_1\\
    \leq& \left(m_0(\Chi)\right)^{p_1-1} \sum_{\frac{k_2}{w}\in[-a-\frac{1}{w}, a]} \sum_{\frac{k_1}{w} \in [-a-\frac{1}{w}, a]}  w^2 \int_{\frac{k_2}{w}}^{\frac{k_2+1}{w}}\int_{\frac{k_1}{w}}^{\frac{k_1+1}{w}}|f(t_1,t_2)|^{p_1}dt_1dt_2 \\
    &\hspace{3 in}\times\int_{|x_1|\leq M} |\Chi(wx_1-k_1,wx_2-k_2)|dx_1\\
    \leq&  \left(m_0(\Chi)\right)^{p_1-1} \sum_{\frac{k_2}{w}\in  [-a-\frac{1}{w}, a]} \|\Chi(\cdot,wx_2-k_2)\|_{L^{1}([-M,M])}\ w \int_{\frac{k_2}{w}}^{\frac{k_2+1}{w}}\int_{-a-\frac{1}{w}}^{a+\frac{1}{w}}|f(t_1,t_2)|^{p_1}dt_1dt_2.
\end{flalign*}
Again using Jensen's inequality and Fubini-Tonelli theorem, we get 
\begin{flalign*}
    I_1	\leq& \, \left(m_0(\Chi)\right)^{p_2-\frac{p_2}{p_1}} \int_{|x_2|>M}\Bigg( \displaystyle\sum_{\frac{k_2}{w}\in[-a-\frac{1}{w}, a]} \|\Chi(\cdot,wx_2-k_2)\|_{L^{1}([-M,M])} \Bigg)^{\frac{p_2}{p_1}-1}\\
    &\hspace{0.1in} \times  \sum_{\frac{k_2}{w}\in[-a-\frac{1}{w}, a]} \|\Chi(\cdot,wx_2-k_2)\|_{L^1([-M,M])}\ \left(w \int_{\frac{k_2}{w}}^{\frac{k_2+1}{w}}\int_{-a-\frac{1}{w}}^{a+\frac{1}{w}}|f(t_1,t_2)|^{p_1}dt_1dt_2\right)^{\frac{p_2}{p_1}}dx_2\\
    \leq & \left(m_0(\Chi)\right)^{p_2-1}\sum_{\frac{k_2}{w}\in[-a-\frac{1}{w}, a]}\ \int_{\frac{k_2}{w}}^{\frac{k_2+1}{w}} \left(\int_{-a-\frac{1}{w}}^{a+\frac{1}{w}}|f(t_1,t_2)|^{p_1}dt_1\right)^{\frac{p_2}{p_1}}dt_2\\
    &\hspace{2 in} \times \int_{|x_2|>M} w\|\Chi(\cdot,wx_2-k_2)\|_{L^1([-M,M])}dx_2.
\end{flalign*}
Thus
        \begin{flalign*}
		I_1 <& \, \frac{\epsilon}{3}\hspace{2pt}  \left(m_0(\Chi)\right)^{p_2-1}\|f\|^{p_2}_{(p_1,p_2)}.
        \end{flalign*}
        Note that the last inequality follows from the following observation:
        \begin{equation*}
         \int_{|x_2|>M}  w \|\Chi(\cdot,wx_2-k_2)\|_{L^1([-M,M])} \hspace{2pt} dx_2< \frac{\epsilon}{3}.
        \end{equation*}
Similarly, we can compute $I_2$. Thus we get
    $$I_2 < \frac{\epsilon}{3}\hspace{2pt}  \left(m_0(\Chi)\right)^{p_2-1}\|f\|^{p_2}_{(p_1,p_2)}. $$
   One can wite $I_3$ as follows:
 	\begin{multline*}
	I_3:=\Bigg(\int_{|x_2|>M}\Bigg(\int_{|x_1|>M}\Bigg(\sum_{\frac{k_2}{w} \in [-a-\frac{1}{w}, a]}\sum_{\frac{k_1}{w}\in[-a-\frac{1}{w}, a]}|\Chi(wx_1-k_1,wx_2-k_2)|\\
		w^{2}\int_{\frac{k_2}{w}}^{\frac{k_2+1}{w}}\int_{\frac{k_1}{w}}^{\frac{k_1+1}{w}}|f(t_1,t_2)|dt_1dt_2\Bigg)^{p_1}dx_1\Bigg)^{\frac{p_2}{p_1}}dx_2\Bigg).
	\end{multline*}
    In view of Jensen's inequality and Fubini-Tonelli theorem, we have 	\begin{flalign*}
		&\int_{|x_1|>M}\Bigg(  \sum_{\frac{k_2}{w} \in [-a-\frac{1}{w}, a]}\sum_{\frac{k_1}{w}\in[-a-\frac{1}{w}, a]}|\Chi(wx_1-k_1,wx_2-k_2)|w^2\int_{\frac{k_2}{w}}^{\frac{k_2+1}{w}}\int_{\frac{k_1}{w}}^{\frac{k_1+1}{w}}|f(t_1,t_2)|dt_1dt_2\Bigg)^{p_1}dx_1\\
		\leq& \left(m_0(\Chi)\right)^{p_1-1} \sum_{\frac{k_2}{w}\in[-a-\frac{1}{w}, a]} \sum_{\frac{k_1}{w} \in [-a-\frac{1}{w}, a]} \Bigg(w^2 \int_{\frac{k_2}{w}}^{\frac{k_2+1}{w}}\int_{\frac{k_1}{w}}^{\frac{k_1+1}{w}}|f(t_1,t_2)|dt_1dt_2\Bigg)^{p_1}\\
		&\hspace{3 in}\times\int_{|x_1|>M}|\Chi(wx_1-k_1,wx_2-k_2)| dx_1\\
		\leq& \left(m_0(\Chi)\right)^{p_1-1} \sum_{\frac{k_2}{w}\in[-a-\frac{1}{w}, a]} \sum_{\frac{k_1}{w} \in [-a-\frac{1}{w}, a]}  w^2 \int_{\frac{k_2}{w}}^{\frac{k_2+1}{w}}\int_{\frac{k_1}{w}}^{\frac{k_1+1}{w}}|f(t_1,t_2)|^{p_1}dt_1dt_2 \\
		&\hspace{3 in}\times\int_{|x_1|>M} |\Chi(wx_1-k_1,wx_2-k_2)|dx_1\\
		\leq&  \left(m_0(\Chi)\right)^{p_1-1} \sum_{\frac{k_2}{w}\in[-a-\frac{1}{w}, a]} \|\Chi(\cdot,wx_2-k_2)\|_{L^1([-M,M]^c)}\ w \int_{\frac{k_2}{w}}^{\frac{k_2+1}{w}}\int_{-a-\frac{1}{w}}^{a+\frac{1}{w}}|f(t_1,t_2)|^{p_1}dt_1dt_2.
	\end{flalign*}
	Again using Jensen's inequality and Fubini-Tonelli theorem, we obtain
	\begin{flalign*}
		I_3
		\leq& \, \left(m_0(\Chi)\right)^{p_2-\frac{p_2}{p_1}} \int_{|x_2|>M} \Bigg( \displaystyle\sum_{\frac{k_2}{w}\in[-a-\frac{1}{w}, a]} \|\Chi(\cdot,wx_2-k_2)\|_{L^1([-M,M]^c)} \Bigg)^{\frac{p_2}{p_1}-1}\\
		&\hspace{0.1 in} \times \sum_{\frac{k_2}{w}\in[-a-\frac{1}{w}, a]} \|\Chi(\cdot,wx_2-k_2)\|_{L^1([-M,M]^c)}\ \left(w \int_{\frac{k_2}{w}}^{\frac{k_2+1}{w}}\int_{-a-\frac{1}{w}}^{a+\frac{1}{w}}|f(t_1,t_2)|^{p_1}dt_1dt_2\right)^{\frac{p_2}{p_1}} dx_2\\
        		\leq & \left(m_0(\Chi)\right)^{p_2-1}\sum_{\frac{k_2}{w}\in[-a-\frac{1}{w}, a]}\ \int_{\frac{k_2}{w}}^{\frac{k_2+1}{w}} \left(\int_{-a-\frac{1}{w}}^{a+\frac{1}{w}}|f(t_1,t_2)|^{p_1}dt_1\right)^{\frac{p_2}{p_1}}dt_2\\
                &\hspace{2 in} \times \int_{|x_2|>M} w\|\Chi(\cdot,wx_2-k_2)\|_{L^1([-M,M]^c)}dx_2.
        \end{flalign*}
      Using  Lemma \ref{cc}, we get
        \begin{flalign*}
		I_3< & \, \frac{\epsilon}{3}\hspace{2pt}  \left(m_0(\Chi)\right)^{p_2-1}\|f\|^{p_2}_{(p_1,p_2)}.
\end{flalign*}
        From (\ref{j}), we get
     \begin{flalign*}
         I &< \frac{ \epsilon}{3}\hspace{2pt}  \left(m_0(\Chi)\right)^{p_2-1}\|f\|^{p_2}_{(p_1,p_2)} + \frac{ \epsilon}{3}\hspace{2pt}  \left(m_0(\Chi)\right)^{p_2-1}\|f\|^{p_2}_{(p_1,p_2)} +\frac{ \epsilon}{3}\hspace{2pt}  \left(m_0(\Chi)\right)^{p_2-1}\|f\|^{p_2}_{(p_1,p_2)} \\
         &< \epsilon\hspace{2pt}  \left(m_0(\Chi)\right)^{p_2-1}\|f\|^{p_2}_{(p_1,p_2)} .
     \end{flalign*}\par

    \textbf{Step 3.} For every $\epsilon >0$, $\exists \hspace{3pt} \delta >0$ such that  
    \begin{equation}\label{leb1}
    \displaystyle\left(\int_{B_2} \int_{B_1}|\Chi(u_1,u_2)|du_1du_2 \right) <  \displaystyle\frac{\epsilon ^{p_2}}{(m_{0}(\Chi))^{p_2-1} \|f\|^{p_2}_{(p_1,p_2)}}\end{equation}
    for every pair of measurable sets $B_i \subset \RR$ with $\displaystyle \mu(B_i)<\delta $, for $i=1,2.$
    By following the approach used in the proof of Theorem \ref{thm 4.2} and absolute continuity property of the Lebesgue integral (\ref{leb1}), we get
    \begin{flalign*}
        \left( \int_{B_2}\left(\int_{B_1}|(K_{w}f)(x_1,x_2)|^{p_1}dx_1\right)^{\frac{p_2}{p_1}}dx_2 \right)^{\frac{1}{p_2}} & \leq (m_{0}(\Chi))^{1-\frac{1}{p_2}} \|f\|_{(p_1,p_2)}  \|\Chi\|^{\frac{1}{p_2}}_{L^{(1,1)}(B_1 \times B_2)}\\
        & < \epsilon.
    \end{flalign*}
    Thus, the integrals $\Bigg(\displaystyle \int_{B_2}\Bigg(\int_{B_1}|K_wf|^{p_1}dx_1\Bigg)^{\frac{p_2}{p_1}}dx_2\Bigg)^{\frac{1}{p_2}}$	are equi-absolutely continuous for arbitrary choices of $B_1,B_2$.

    Hence, by using the Vitali convergence theorem, we get $\|K_{w}f-f\|_{(p_1,p_2)}\to 0$ as $w\to \infty$. Following along the similar lines, one can generalize the result for $n \geq 3.$
\end{proof}

The space of compactly supported continuous functions $C_c(\mathbb{R}^n)$ is dense in the mixed norm Lebesgue spaces $L^{\overrightarrow{P}}(\mathbb{R}^n)$ (see \cite{ivec}). Therefore, Theorem \ref{thm 4.3} can be generalized to a mixed norm Lebesgue space $L^{\overrightarrow P}(\RR^{n})$. In particular, we have the following result.


\begin{thm}
    For $f\in L^{\overrightarrow P}(\RR^{n})$, we have $$ \lim_{w\to\infty}\|K_{w}f-f\|_{\overrightarrow{P}}=0. $$
\end{thm}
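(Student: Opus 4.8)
The plan is to combine the uniform boundedness of the operators $(K_w)_{w>0}$ from Theorem \ref{thm 4.2} with their convergence on the dense subspace $C_c(\RR^n)$ from Theorem \ref{thm 4.3} and the density statement of Lemma \ref{cd}, by a standard $\epsilon/3$-type argument. The key structural features being exploited are that each $K_w$ is linear and that the operator norm bound in Theorem \ref{thm 4.2} is \emph{independent of} $w$.

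First I would fix $\epsilon>0$ and, using Lemma \ref{cd}, choose $g\in C_c(\RR^n)$ with $\|f-g\|_{\overrightarrow{P}}<\epsilon$. Writing $C:=(m_0(\Chi))^{1-\frac{1}{p_n}}\,\|\Chi\|_{(1,\dots,1)}^{\frac{1}{p_n}}$ for the constant in Theorem \ref{thm 4.2}, and using linearity of $K_w$, I would split
\begin{align*}
\|K_{w}f-f\|_{\overrightarrow{P}}
&\leq \|K_{w}(f-g)\|_{\overrightarrow{P}}+\|K_{w}g-g\|_{\overrightarrow{P}}+\|g-f\|_{\overrightarrow{P}}\\
&\leq (C+1)\,\epsilon+\|K_{w}g-g\|_{\overrightarrow{P}},
\end{align*}
where the bound on the first summand is Theorem \ref{thm 4.2} applied to $f-g\in L^{\overrightarrow{P}}(\RR^n)$ and the bound on the last summand is the choice of $g$.

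Since $g\in C_c(\RR^n)$, Theorem \ref{thm 4.3} gives $\|K_{w}g-g\|_{\overrightarrow{P}}\to 0$ as $w\to\infty$, so there is $w_0>0$ with $\|K_{w}g-g\|_{\overrightarrow{P}}<\epsilon$ for all $w\geq w_0$. Hence $\limsup_{w\to\infty}\|K_{w}f-f\|_{\overrightarrow{P}}\leq (C+2)\,\epsilon$, and since $\epsilon>0$ was arbitrary the claim follows.

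I do not anticipate a genuine obstacle here; the proof is essentially a soft functional-analytic argument once Theorems \ref{thm 4.2} and \ref{thm 4.3} and Lemma \ref{cd} are available. The only points deserving a word of care are that $K_{w}f$ is well defined and lies in $L^{\overrightarrow{P}}(\RR^n)$ for every $f\in L^{\overrightarrow{P}}(\RR^n)$ (which is precisely what Theorem \ref{thm 4.2} provides, the local averages $w^{n}\int_{I_{\textbf{k},w}}f$ being finite for an a.e.\ representative), and, crucially, that the bounding constant $C$ does not depend on $w$, which is what allows the error $\|K_w(f-g)\|_{\overrightarrow{P}}$ to be controlled uniformly. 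The passage from $n=2$ to general $n\in\NN$ requires nothing new, since each ingredient invoked is already stated for arbitrary $n$.
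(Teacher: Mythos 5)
Your proposal is correct and follows essentially the same route as the paper's proof: approximate $f$ by $g\in C_c(\RR^n)$ via the density lemma, split with the triangle inequality and linearity of $K_w$, control $\|K_w(f-g)\|_{\overrightarrow{P}}$ by the $w$-independent bound of Theorem \ref{thm 4.2}, and let Theorem \ref{thm 4.3} handle $\|K_wg-g\|_{\overrightarrow{P}}$. The only difference is cosmetic bookkeeping (the paper pre-scales $\epsilon$ while you take a $\limsup$), so nothing further is needed.
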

\begin{proof}
 Let $f\in L^{\overrightarrow{P}}(\RR^{n})$ and $\epsilon >0$ be fixed. Using denisty of $C_{c}(\mathbb{R}^n)$ in $L^{\overrightarrow P}(\RR^n)$, 
 there exists a function $g \in C_{c}(\RR^{n})$ such that 
	$$\|g-f\|_{\overrightarrow{P}}< \frac{\epsilon}{2\left(1+(m_{0}(\Chi))^{1-\frac{1}{p_n}}\hspace{3pt}\|\Chi\|_{1}^{\frac{1}{p_n}}\right)}.$$
	Utilizing the triangle inequality for $\|\cdot\|_{\overrightarrow{P}}$ and Theorem \ref{thm 4.2} and Theorem \ref{thm 4.3}, it follows that
	\begin{equation*}
		\begin{split}
			\|K_{w}f-f\|_{\overrightarrow{P}} &\leq \|g-f\|_{\overrightarrow{P}}+\|K_{w}g-g\|_{\overrightarrow{P}}+\|K_{w}f-K_{w}g\|_{\overrightarrow{P}}\\
			&\leq \|g-f\|_{\overrightarrow{P}}+\|K_{w}g-g\|_{\overrightarrow{P}}+(m_{0}(\Chi))^{1-\frac{1}{p_n}}\hspace{3pt}\|\Chi\|_{1}^{\frac{1}{p_n}}\hspace{3pt}\|g-f \|_{\overrightarrow{P}} \\
			& \leq\left(1+(m_{0}(\Chi))^{1-\frac{1}{p_n}}\hspace{3pt}\|\Chi\|_{1}^{\frac{1}{p_n}}\right)\|g-f \|_{\overrightarrow{P}} + \|K_{w}g-g\|_{\overrightarrow{P}}\\
			& <{\frac{\epsilon}{2}} + {\frac{\epsilon}{2}}\\
			&=\epsilon,
		\end{split}
	\end{equation*}
	$\forall \hspace{1pt} w \geq N_{0}$, for some $N_{0} \in \NN.$
\end{proof}


\section{Kantorovich-type Sampling Operators on mixed norm Orlicz space}
\label{5}

The objective of this section is to establish the convergence of Kantorovich-type sampling operators  in the setting of  mixed norm Orlicz spaces $L^{ \overrightarrow{\Phi}}(\RR^n)$. 
To prove this result, we utilize the boundedness of the operator (\ref{kan}) and the denseness property of $C_c(\RR^n)$ within the framework of $L^{ \overrightarrow{\Phi}}(\RR^n)$.
In the following theorem, we discuss the boundedness of these operators in  $L^{ \overrightarrow{\Phi}}(\RR^n)$  using the modular functional $I^{\overrightarrow{\Phi}}.$

\begin{thm}\label{thm 5.1}
    Let $f\in L^{\overrightarrow{\Phi}}(\mathbb{R}^n) $  with $\overrightarrow{\Phi}=(\phi_1,\dots,\phi_n)$ and $\lambda >0$, there holds
    \begin{equation} \label{5.1}
    	I^{\overrightarrow{\Phi}}(\lambda K_wf)\leq \frac{\|\Chi\|_1}{\left( m_{0}(\Chi)\right)^n}\hspace{2pt} \hspace{2pt}I^{ \overrightarrow{\Phi}}(\lambda\left(  m_{0}(\Chi)\right)^nf) .
    \end{equation}
\end{thm}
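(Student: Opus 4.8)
The plan is to mirror the proof of Theorem~\ref{thm 4.2}, replacing the power functions $t\mapsto t^{p_i}$ by the Orlicz functions $\phi_i$ and using their convexity (together with $\phi_i(0)=0$ and monotonicity) in place of the convexity of $t\mapsto t^{p_i}$. As there, I would prove \eqref{5.1} first for $n=2$ and then run the same recursion for arbitrary $n$. If $I^{\overrightarrow{\Phi}}(\lambda\,(m_0(\Chi))^n f)=\infty$ there is nothing to prove, so the right-hand side may be assumed finite; in any case every integrand below is nonnegative, so Tonelli's theorem justifies each interchange of summation and integration. Note also that $m_0(\Chi)\ge 1$, since $1=\big|\sum_{\mathbf k}\Chi(\mathbf u-\mathbf k)\big|\le\sum_{\mathbf k}|\Chi(\mathbf u-\mathbf k)|\le m_0(\Chi)$.

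The starting point is the pointwise bound $\lambda|K_wf(\mathbf x)|\le\sum_{\mathbf k\in\ZZ^n}|\Chi(w\mathbf x-\mathbf k)|\big(\lambda w^n\int_{I_{\mathbf k,w}}|f|\big)$. Applying $\phi_1$, dividing the kernel weights by $m_0(\Chi)$ so that they sum to at most $1$, and using the sub-convexity $\phi_1\big(\sum_{\mathbf k}c_{\mathbf k}z_{\mathbf k}\big)\le\sum_{\mathbf k}c_{\mathbf k}\phi_1(z_{\mathbf k})$ valid whenever $\sum_{\mathbf k}c_{\mathbf k}\le1$, one obtains
\[
\phi_1\big(\lambda|K_wf(\mathbf x)|\big)\le\sum_{\mathbf k}\frac{|\Chi(w\mathbf x-\mathbf k)|}{m_0(\Chi)}\,\phi_1\Big(\lambda\, m_0(\Chi)\,w^n\!\!\int_{I_{\mathbf k,w}}|f|\Big).
\]
Since $w^n\int_{I_{\mathbf k,w}}(\cdot)$ is the average over a cube of measure $w^{-n}$, Jensen's integral inequality pushes $\phi_1$ inside that integral. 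Integrating in $x_1$, changing variables $u_1=wx_1-k_1$, and summing over $k_1$ collapses the $t_1$-integration to all of $\RR$ and produces, up to a factor $m_0(\Chi)^{-1}\sum_{k_2,\dots,k_n}\|\Chi(\cdot,wx_2-k_2,\dots,wx_n-k_n)\|_{L^1(du_1)}$, the genuine inner Orlicz integral $\int_\RR\phi_1(m_0(\Chi)\lambda|f|)\,dt_1$. One now repeats this step with $\phi_2$ acting on the $k_2$-sum, then $\phi_3$, and so on: at level $i$ the two ingredients are (i) convexity of $\phi_i$ combined with the estimate that $\sum_{k_i}$ of the $L^1$-norm of $\Chi$ in its first $i-1$ coordinates, evaluated at $(wx_i-k_i,\dots,wx_n-k_n)$ in its remaining coordinates, is bounded by $m_0(\Chi)$ uniformly in $x_i,\dots,x_n$ and in $k_{i+1},\dots,k_n$ (proved exactly as in the chain of inequalities displayed inside the proof of Theorem~\ref{interpol}, by periodising in the $i$-th variable and invoking the definition of $m_0(\Chi)$), and (ii) Jensen's integral inequality in the $t_i$-variable. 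After the $n$-th step the surviving kernel mass is $\int_{\RR^n}|\Chi|=\|\Chi\|_{(1,\dots,1)}$, the telescoping of the $n$ collapsing identities $\sum_{k_i}w\int_{k_i/w}^{(k_i+1)/w}(\cdot)\,dt_i=\int_\RR(\cdot)\,dt_i$ reassembles the outermost integrals into a modular of $f$, and the accumulated constants — one factor $m_0(\Chi)$ inside each $\phi_i$ and one reciprocal factor in front — combine (using $m_0(\Chi)\ge1$ and, if a sharper intermediate bound is obtained, the convexity inequality $\phi_i(\beta t)\ge\beta\phi_i(t)$ for $\beta\ge1$) into the right-hand side of \eqref{5.1}.

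The main difficulty is organisational rather than conceptual: one must keep the iterated modular structure intact while, at each of the $n$ levels, performing a convexity step on one discrete index and a Jensen step on the matching continuous average, and one must verify that the partial $L^1$-masses of $\Chi$ over the already-integrated variables, summed over the current index, remain bounded by $m_0(\Chi)$ uniformly in the remaining variables. Once this is set up and checked to telescope correctly for $n=2$, the extension to general $n\in\NN$ is a routine induction, exactly as in Theorems~\ref{interpol} and \ref{thm 4.2}.
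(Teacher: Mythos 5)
Your proposal follows essentially the same route as the paper's proof: normalize the kernel weights by $m_0(\Chi)$ and apply convexity of each $\phi_i$ to the discrete sums, use Jensen's integral inequality on the cube averages, invoke the uniform bound $\sum_{k_i}\|\Chi(\cdot,\dots,\cdot,wx_i-k_i,\dots)\|_{(1,\dots,1)}\le m_0(\Chi)$ established in the proof of Theorem~\ref{interpol}, and telescope $\sum_{k_i} w\int_{k_i/w}^{(k_i+1)/w}(\cdot)\,dt_i=\int_{\RR}(\cdot)\,dt_i$ at each level to reassemble the modular of $f$ with the surviving kernel mass $\|\Chi\|_{(1,\dots,1)}$. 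Your explicit appeal to $m_0(\Chi)\ge 1$ and $\phi_i(\beta t)\ge\beta\phi_i(t)$ for $\beta\ge 1$ to push the accumulated normalization factors inside the argument, yielding $I^{\overrightarrow{\Phi}}(\lambda (m_0(\Chi))^n f)$ on the right, is a step the paper performs implicitly.
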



\begin{proof}
	First, we examine the result for $n=2$, i.e., for $\overrightarrow{\Phi}=(\phi_1,\phi_2)$.
Assume that the modular term $I^{\overrightarrow{\Phi}}-$ on the right hand side of (\ref{5.1}) is finite. 
From convexity property on the summation and Jensen's inequality on integration, we get 
\begin{flalign*}
	&\int_{\RR}  \phi_1\bigg(\lambda\sum_{k_2 \in \ZZ}\sum_{k_1 \in \ZZ}  \Chi(wx_1-k_1,wx_2-k_2) w^{2} \int_{\frac{k_2}{w}}^{\frac{k_2+1}{w}} \int_{\frac{k_1}{w}}^{\frac{k_1+1}{w}}|f(t_1,t_2)|dt_1dt_2\bigg)dx_1\\
	\leq & \frac{1}{m_{0}(\Chi)}\int_{\RR} \sum_{k_2 \in \ZZ}\sum_{k_1 \in \ZZ} \frac{\Chi(wx_1-k_1,wx_2-k_2)}{m_{0}(\Chi)}\hspace{2pt} \phi_1\left(\lambda m_{0}(\Chi) w^{2} \int_{\frac{k_2}{w}}^{\frac{k_2+1}{w}} \int_{\frac{k_1}{w}}^{\frac{k_1+1}{w}}|f(t_1,t_2)|dt_1dt_2 \right)\\
	\leq & \frac{1}{m_{0}(\Chi)}\sum_{k_2 \in \ZZ}\sum_{k_1 \in \ZZ} \|\Chi(\cdot,wx_2-k_2)\|_1 \int_{\frac{k_2}{w}}^{\frac{k_2+1}{w}}\int_{\frac{k_1}{w}}^{\frac{k_1+1}{w}} w \phi_1 \left(\lambda m_{0}(\Chi)|f(t_1,t_2)|\right)dt_1dt_2\\
		\leq & \frac{1}{m_{0}(\Chi)}\sum_{k_2 \in \ZZ} \|\Chi(\cdot,wx_2-k_2)\|_1 \int_{\frac{k_2}{w}}^{\frac{k_2+1}{w}}\int_{\RR} w \phi_1 \left(\lambda m_{0}(\Chi)|f(t_1,t_2)|\right)dt_1dt_2.
		\end{flalign*}
		Using the convexity and Jensen's inequality on integration, we get
		\begin{align*}
				&I^{\overrightarrow{\Phi}}(\lambda K_wf)\\
			\leq& \frac{1}{m_{0}(\Chi)}\int_{\RR} \phi_2 \left(\lambda \sum_{k_2 \in \ZZ} \|\Chi(\cdot,wx_2-k_2)\|_1 \int_{\frac{k_2}{w}}^{\frac{k_2+1}{w}}\int_{\RR} w \phi_1 \left(\lambda m_{0}(\Chi)|f(t_1,t_2)|\right)dt_1dt_2 \right)dx_2\\
				\leq& \frac{1}{\left(m_{0}(\Chi)\right)^2}\int_{\RR}\sum_{k_2 \in \ZZ} \|\Chi(\cdot,wx_2-k_2)\|_1  \hspace{2pt}\phi_2 \left(\int_{\frac{k_2}{w}}^{\frac{k_2+1}{w}}\int_{\RR} w \phi_1 \left(\lambda\left(m_{0}(\Chi)\right)^2|f(t_1,t_2)|\right)dt_1dt_2 \right)dx_2\\
					\leq& \frac{1}{\left(m_{0}(\Chi)\right)^2}\int_{\RR} \sum_{k_2 \in \ZZ} \|\Chi(\cdot,wx_2-k_2)\|_1 \hspace{2pt}w \int_{\frac{k_2}{w}}^{\frac{k_2+1}{w}}  \phi_2 \left(\int_{\RR}  \phi_1 \left(\lambda\left(m_{0}(\Chi)\right)^2|f(t_1,t_2)|\right)dt_1\right)dt_2\hspace{2pt} dx_2\\
				\leq& \frac{\|\Chi\|_{1}}{\left(m_{0}(\Chi)\right)^2}I^{(\phi_1,\phi_2)}( \lambda\left( m_{0}(\Chi)\right)^2f)\hspace{1pt}.
		\end{align*}
Now	following along the same lines, we obtain
	\begin{equation*}
		\begin{split}
	I^{\overrightarrow{\Phi}}(\lambda K_wf)
	&\leq \frac{\|\Chi\|_1 }{\left( m_{0}(\Chi)\right)^n}\hspace{2pt}\hspace{2pt}I^{ \overrightarrow{\Phi}}(\lambda \left( m_{0}(\Chi)\right)^n f).
	\end{split}
	\end{equation*}
This proves the required estimate.
\end{proof}

\begin{rem}\label{rem 5.2}
The modular inequality on Kantorovich-type sampling operator can be written in terms of norm with respect to mixed norm Orlicz space. Let $f\in L^{\vc{\Phi}}(\RR^n)$ be arbitrary and $\lambda=\|\left(m_0(\Chi)\right)^nf\|_{\vc{\Phi}}$, then by the definition of mixed norm Orlicz space we have $$I^{\vc{\Phi}}\Big(\frac{\left(m_0(\Chi)\right)^nf}{\lambda}\Big)\leq 1.$$ The inequality \eqref{5.1} implies
\begin{align*}
    I^{\vc{\Phi}}\Big(\frac{K_wf}{\lambda}\Big)\leq \frac{\|\Chi\|_1}{\left(m_0(\Chi)\right)^n}I^{\vc{\Phi}}\Big(\frac{\left(m_0(\Chi)\right)^nf}{\lambda}\Big)\leq 1.
\end{align*}
The last inequality hold from the fact that $m_0(\Chi)\geq \max\{1, \|\Chi\|_1\}$.
Therefore, by the definition of mixed  norm Orlicz space, we have
\begin{align*}
    \|K_wf\|_{\vc{\Phi}}\leq \|\left(m_0(\Chi)\right)^nf\|_{\vc{\Phi}}\leq \left(m_0(\Chi)\right)^n\|f\|_{\vc{\Phi}}. 
\end{align*}
\end{rem}

The following theorem establishes the convergence of \( (K_w)_{w>0} \) in \( C_c(\mathbb{R}^n) \)
 considered within the framework of mixed norm Orlicz spaces.

\begin{thm}\label{thm 5.2}
    For $\overrightarrow{\Phi}=(\phi_1,\dots,\phi_n)$, $\lambda>0$ and $f\in C_{c}(\RR^{n})$, we have
    \begin{equation*}  
    	\lim_{w\to\infty}I^{ \overrightarrow{\Phi}}(\lambda (K_wf-f))=0.
    \end{equation*}
\end{thm}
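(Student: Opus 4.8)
The plan is to combine the two facts already in hand: the sup-norm convergence $\|K_wf-f\|_{\infty}\to 0$ for $f\in C_{c}(\RR^n)$ (from \cite[Theorem~4.1]{multi}), which handles $K_wf-f$ on a fixed large cube, and the Jensen/convexity estimate behind Theorem \ref{thm 5.1}, which --- once sharpened through Lemma \ref{cc} --- handles the modular of $K_wf$ far from the support of $f$. As in every previous proof I would first treat $n=2$, $\overrightarrow{\Phi}=(\phi_1,\phi_2)$, indicating at the end that the general case follows by the same nesting of Jensen's inequality used throughout Sections \ref{4}--\ref{5}; the convexity property (III) of $I^{\overrightarrow{\Phi}}$ replaces the triangle inequality used in the $L^{\overrightarrow P}$ argument (Theorem \ref{thm 4.3}).

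Fix $\lambda>0$, $\epsilon>0$, put $B=\|f\|_{\infty}$, choose $a>0$ with $\operatorname{supp}f\subseteq[-a,a]^2$, set $c=a+1$, and let $G=[-M,M]^2$ with $M>c$ to be chosen. Since $f\chi_{G^c}=0$, the decomposition $\lambda(K_wf-f)=\tfrac12\bigl(2\lambda(K_wf-f)\chi_{G}\bigr)+\tfrac12\bigl(2\lambda K_wf\,\chi_{G^c}\bigr)$ together with (III) yields
\[
I^{\overrightarrow{\Phi}}\bigl(\lambda(K_wf-f)\bigr)\ \le\ I^{\overrightarrow{\Phi}}\bigl(2\lambda(K_wf-f)\chi_{G}\bigr)\ +\ I^{\overrightarrow{\Phi}}\bigl(2\lambda K_wf\,\chi_{G^c}\bigr).
\]
For the first (core) term, monotonicity of $I^{\overrightarrow{\Phi}}$ and $|K_wf-f|\le\eta_w:=\|K_wf-f\|_{\infty}$ on $G$ give the explicit value $I^{\overrightarrow{\Phi}}(2\lambda\eta_w\chi_{G})=2M\,\phi_2\bigl(2M\,\phi_1(2\lambda\eta_w)\bigr)$; since each $\phi_i$ is convex with $\phi_i(0)=0$ and finite near the origin, $\phi_i(u)\to 0$ as $u\to 0^+$, so (with $M$ now fixed) this quantity $\to 0$ as $w\to\infty$ and is $<\epsilon/2$ for all large $w$.

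The substance is the tail term. For $w\ge 1$ only indices $\mathbf k$ with $\|\mathbf k\|_{\infty}\le wc$ contribute to $K_wf$, since otherwise $I_{\mathbf k,w}\cap[-a,a]^2=\emptyset$. Writing $G^c$ as the union of the three strips $\{|x_1|\le M,\ |x_2|>M\}$, $\{|x_1|>M,\ |x_2|\le M\}$, $\{|x_1|>M,\ |x_2|>M\}$ as in \eqref{j} and using (III) once more, it suffices to bound $I^{\overrightarrow{\Phi}}$ of $6\lambda K_wf$ restricted to each strip. On each strip I would rerun the computation from the proof of Theorem \ref{thm 5.1} --- pushing $\phi_1$ and then $\phi_2$ through the double sum by convexity and Jensen's inequality --- the only change being that the one-dimensional $L^1$-norms of $\Chi$ appearing are now taken over $[-M,M]$ or $[-M,M]^c$ in the appropriate variable and are coupled to an outer integral over $\{|x_j|>M\}$; by Lemma \ref{cc} (with its uniformity over $\|\mathbf k\|_{\infty}\le wc$) these are all $<\delta$ once $w$ is large. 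This produces a bound of the form $C\,\delta\,\|\Chi\|_{(1,1)}\,I^{\overrightarrow{\Phi}}\bigl(6\lambda(m_0(\Chi))^2 f\bigr)$ with $C$ absolute and the last modular a fixed finite number (as $f$ is bounded with compact support); choosing $M$ large, hence $\delta$ small, makes the tail term $<\epsilon/2$ for all sufficiently large $w$. Adding the two estimates gives $I^{\overrightarrow{\Phi}}\bigl(\lambda(K_wf-f)\bigr)<\epsilon$ for large $w$, and the case of general $n$ is identical.

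The main obstacle is precisely this tail estimate: one has to carry the nested-integral Jensen argument of Theorem \ref{thm 5.1} through on truncated regions and verify that the quantities it generates are exactly the kernel-tail integrals $\|\Chi(\cdot,wx_2-k_2)\|_{L^1([-M,M]^c)}$ and their analogues that Lemma \ref{cc} is built to control. The rest is bookkeeping, plus the elementary right-continuity of a convex Orlicz function at the origin that forces the core term to vanish.
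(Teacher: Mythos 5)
Your proposal is correct, and its tail estimate (the three-strip decomposition of $G^c$, Jensen/convexity pushed through the nested sums, and Lemma \ref{cc} controlling $\|\Chi(\cdot,wx_2-k_2)\|_{L^1([-M,M])}$ and $\|\Chi(\cdot,wx_2-k_2)\|_{L^1([-M,M]^c)}$ uniformly over $\|\mathbf{k}\|_\infty\le wc$) is essentially the same computation as the paper's bounds for $J_1,J_2,J_3$. Where you genuinely diverge is in how the pieces are glued: the paper frames the whole proof as a verification of the three hypotheses of the Vitali convergence theorem for mixed norm spaces \cite{vitali} — uniform convergence from \cite[Theorem 4.1]{multi}, smallness of the modular outside a large cube, and equi-absolute continuity of the inner integrals over small-measure sets (the latter obtained from the Theorem \ref{thm 5.1} estimate plus absolute continuity of $\Chi$) — whereas you bypass Vitali entirely with a direct core-plus-tail splitting via property (III) of the modular, handling the core by the explicit bound $2M\,\phi_2\bigl(2M\,\phi_1(2\lambda\|K_wf-f\|_\infty)\bigr)\to 0$. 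Your route is more elementary and self-contained (no equi-absolute-continuity step, no external convergence theorem), and you are more careful than the paper about the convexity constants ($2,3,6\lambda$) needed to split the modular over disjoint regions; the cost is that you must get the quantifier order right between $M$ and $w$, which you do, and that your argument leans on the right-continuity of each $\phi_i$ at the origin (automatic from convexity once $\phi_i$ is finite somewhere, as you note). The paper's Vitali framework, by contrast, packages the same ingredients in a form that is reused verbatim for Theorem \ref{thm 4.3}. Both arguments share the same implicit assumption that $I^{\overrightarrow{\Phi}}(\mu f)<\infty$ for the relevant multiples $\mu$ of a compactly supported bounded $f$, which requires the $\phi_i$ to be finite-valued; this is a gap in the paper's generality, not in your proof.
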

\begin{proof}
	 We begin with $\overrightarrow{\Phi}=(\phi_1,\phi_2)$ for better mathematical visualization. Here we prove that
     \begin{equation*}
     	\lim_{w\to\infty}I^{\overrightarrow{\Phi}}(\lambda(K_{w}f-f))
     	=0,
     	\end{equation*}
    for every $\lambda>0.$ 
We will be using the \emph{Vitali Convergence theorem} for mixed norm space \cite{vitali}. The proof proceeds in the following steps:\par 

\textbf{Step 1.} Utilizing \cite[Theorem 4.1]{multi}, we get $$\lim_{w\to\infty}\phi_2(\phi_1(\lambda\|K_wf-f\|_{\infty}))=0.$$

\textbf{Step 2.} As $f \in C_c(\RR^n)$, i.e., $\text{supp}\, f \subseteq [- a, a]^2$. For every sufficiently large $w>0$,   whenever $(k_1,k_2) \notin [-wa-1,wa]^2$, we have $\left\lbrace [\frac{k_1}{w},\frac{k_1+1}{w}] \times [\frac{k_2}{w},\frac{k_2+1}{w}]\right\rbrace \cap [- a, a]^2=\emptyset$. This provides
  \begin{flalign*} 
  	\label{5.2}
  \int_{\frac{k_2}{w}}^{\frac{k_2+1}{w}}	\int_{\frac{k_1}{w}}^{\frac{k_1+1}{w}}|f(t_1,t_2)|\hspace{2pt}dt_1dt_2=0.
  \end{flalign*}
	By Lemma \ref{cc}, for every fixed $\epsilon >0$, there exists a constant $M>0$ (we can assume $M >  a$) such that 
	\begin{equation*}
		\int_{\|\textbf{x}\|_{\infty}>{M}}w^n|\Chi(w\textbf{x}-\textbf{k})|d\textbf{x} < \epsilon ,
	\end{equation*}
    for every $(k_1,k_2) \in [-wa-1,wa]^2$.
  For a set $\textbf{G}=[-M,M]^2$. We compute on the complement of $\textbf{G}$
 \begin{flalign*}
	J &= \int_{|x_2|>M}\phi_{2}\left(\int_{|x_1|\leq M}\phi_{1}(\lambda|(K_{w}f)(x_1,x_2)|)\,dx_1\right)\,dx_2\\
    & \hspace{1in}+\int_{|x_2|\leq M}\phi_{2}\left(\int_{|x_1|>M}\phi_{1}(\lambda |(K_{w}f)(x_1,x_2)|)\,dx_1\right)\,dx_2 \nonumber \\
	& \hspace{2in}+\int_{|x_2|>M}\phi_{2}\left(\int_{|x_1|>M}\phi_{1}(\lambda|(K_{w}f)(x_1,x_2)|)\,dx_1\right)\,dx_2 \nonumber \\
    &:= J_1+ J_2+J_3. 
\end{flalign*}
    First compute $J_1.$ In view of Jensen's inequality and  Fubini-Tonelli theorem, we obtain 
\begin{flalign*}
  &\int_{|x_1|\leq M}  \phi_1\left(\lambda \sum_{\frac{k_2}{w} \in  [-a-\frac{1}{w}, a]} \sum_{\frac{k_1}{w}\in  [-a-\frac{1}{w}, a]} |\Chi(wx_1-k_1,wx_2-k_2)| w^{2}
\int_{\frac{k_2}{w}}^{\frac{k_2+1}{w}} \int_{\frac{k_1}{w}}^{\frac{k_1+1}{w}}|f(t_1,t_2)|dt_1dt_2\right)dx_1\\
   &\displaystyle \leq  \frac{1}{ m_{0}(\Chi)}    \sum_{\frac{k_2}{w} \in  [-a-\frac{1}{w}, a]} \sum_{\frac{k_1}{w}\in  [-a-\frac{1}{w}, a]} \phi_1\left(\lambda  m_{0}(\Chi)\hspace{2pt}w^2 \int_{\frac{k_2}{w}}^{\frac{k_2+1}{w}}\int_{\frac{k_1}{w}}^{\frac{k_1+1}{w}}|f(t_1,t_2)|dt_1dt_2\right)\\
 &\hspace{3 in} \times\int_{|x_1|\leq M}|\Chi(wx_1-k_1,wx_2-k_2)|dx_1\\  
   &\displaystyle \leq  \frac{1}{m_{0}(\Chi)}  \sum_{\frac{k_2}{w} \in  [-a-\frac{1}{w}, a]} \sum_{\frac{k_1}{w}\in  [-a-\frac{1}{w}, a]}    \hspace{2pt}w^2 \int_{\frac{k_2}{w}}^{\frac{k_2+1}{w}}\int_{\frac{k_1}{w}}^{\frac{k_1+1}{w}}\phi_1\left(\lambda  m_{0}(\Chi)|f(t_1,t_2)|\right)dt_1dt_2\\
     &\hspace{3 in} \times\int_{|x_1|\leq M}  |\Chi(wx_1-k_1,wx_2-k_2)|dx_1\\
       &\hspace{-0.2cm}\displaystyle \leq \frac{1}{m_{0}(\Chi)}  \sum_{\frac{k_2}{w} \in  [-a-\frac{1}{w}, a]}    \hspace{2pt}w \int_{\frac{k_2}{w}}^{\frac{k_2+1}{w}} \int_{-a-\frac{1}{w}}^{a+\frac{1}{w}}\phi_1\left(\lambda  m_{0}(\Chi)|f(t_1,t_2)|\right)dt_1dt_2   \hspace{4pt} \|\Chi(\cdot,wx_2-k_2)\|_{L^{1}([-M,M])}.
\end{flalign*}
Again using Jensen's inequality and Fubini-Tonelli theorem, we get
\begin{flalign*}
    J_1 
     & \leq \frac{1}{\left(m_{0}(\Chi)\right)^2}   \sum_{\frac{k_2}{w} \in  [-a-\frac{1}{w}, a]}  \phi_{2} \left(   \hspace{2pt}w \int_{\frac{k_2}{w}}^{\frac{k_2+1}{w}}\int_{-a-\frac{1}{w}}^{a+\frac{1}{w}}\phi_1\left(\lambda  \left(m_{0}(\Chi)\right)^2|f(t_1,t_2)|\right) dt_1dt_2  \right) \\
     &\hspace{2.5 in} \times \int_{|x_2|>M} \|\Chi(\cdot,wx_2-k_2)\|_{L^{1}([-M,M])}dx_2\\
       &\leq   \frac{1}{\left(m_{0}(\Chi)\right)^2}   \sum_{\frac{k_2}{w} \in  [-a-\frac{1}{w}, a]}   \hspace{2pt} \int_{\frac{k_2}{w}}^{\frac{k_2+1}{w}}  \phi_{2} \left( \int_{-a-\frac{1}{w}}^{a+\frac{1}{w}}\phi_1\left(\lambda  \left(m_{0}(\Chi)\right)^2|f(t_1,t_2)|\right)dt_1 \right) dt_2\\
      &\hspace{2.5 in} \times \int_{|x_2|>M} w \|\Chi(\cdot,wx_2-k_2)\|_{L^{1}([-M,M])}dx_2\\
        & \hspace{-0.2cm}< \frac{\epsilon}{3 \left(m_{0}(\Chi)\right)^2}   I^{(\phi_1,\phi_2)}\left(\lambda  \left(m_{0}(\Chi)\right)^2 f\right) .
\end{flalign*}
 Note that the last inequality follows from the following observation:
        \begin{equation*}
         \int_{|x_2|>M}  w \|\Chi(\cdot,wx_2-k_2)\|_{L^1([-M,M])} \hspace{2pt} dx_2< \frac{\epsilon}{3}.
        \end{equation*}
Similarly, we can compute $J_2$. Thus we get
$$J_2 <  \frac{\epsilon}{3\left(m_{0}(\Chi)\right)^2}   I^{(\phi_1,\phi_2)}\left(\lambda  \left(m_{0}(\Chi)\right)^2 f\right) .$$
  Let us rewrite $J_3 $ as :
 	\begin{multline*}
  	J_3:\displaystyle  =\int_{|x_2|>M} \phi_{2}\Bigg( \int_{|x_1|>M}  \phi_1\Bigg(\lambda \sum_{\frac{k_2}{w} \in [-a,a]} \sum_{\frac{k_1}{w}\in[-a,a]} |\Chi(wx_1-k_1,wx_2-k_2)| w^{2}\\
    \int_{\frac{k_2}{w}}^{\frac{k_2+1}{w}}\int_{\frac{k_1}{w}}^{\frac{k_1+1}{w}}|f(t_1,t_2)|dt_1dt_2\Bigg)dx_1\Bigg) dx_2.
\end{multline*}
In view of Jensen's inequality and Fubini-Tonelli theorem, we obtain 
\begin{flalign*}
  &\int_{|x_1|>M}  \phi_1\left(\lambda \sum_{\frac{k_2}{w} \in  [-a-\frac{1}{w}, a]} \sum_{\frac{k_1}{w}\in  [-a-\frac{1}{w}, a]} |\Chi(wx_1-k_1,wx_2-k_2)| w^{2}
\int_{\frac{k_2}{w}}^{\frac{k_2+1}{w}} \int_{\frac{k_1}{w}}^{\frac{k_1+1}{w}}|f(t_1,t_2)|dt_1dt_2\right)dx_1\\
   &\displaystyle \leq  \frac{1}{ m_{0}(\Chi)}  \sum_{\frac{k_2}{w} \in  [-a-\frac{1}{w}, a]} \sum_{\frac{k_1}{w}\in  [-a-\frac{1}{w}, a]} \phi_1\left(\lambda  m_{0}(\Chi)\hspace{2pt}w^2 \int_{\frac{k_2}{w}}^{\frac{k_2+1}{w}}\int_{\frac{k_1}{w}}^{\frac{k_1+1}{w}}|f(t_1,t_2)|dt_1dt_2\right)\\
    &\hspace{2.5in} \times \int_{|x_1|>M} |\Chi(wx_1-k_1,wx_2-k_2)|dx_1\\  
   &\displaystyle \leq  \frac{1}{m_{0}(\Chi)}  \sum_{\frac{k_2}{w} \in  [-a-\frac{1}{w}, a]} \sum_{\frac{k_1}{w}\in  [-a-\frac{1}{w}, a]}    \hspace{2pt}w^2 \int_{\frac{k_2}{w}}^{\frac{k_2+1}{w}}\int_{\frac{k_1}{w}}^{\frac{k_1+1}{w}}\phi_1\left(\lambda  m_{0}(\Chi)|f(t_1,t_2)|\right)dt_1dt_2\\
   &\hspace{2.5in} \times \int_{|x_1|>M}   |\Chi(wx_1-k_1,wx_2-k_2)|dx_1\\
    &\hspace{-0.2cm}\displaystyle \leq  \frac{1}{m_{0}(\Chi)}  \sum_{\frac{k_2}{w} \in  [-a-\frac{1}{w}, a]}    \hspace{2pt}w \int_{\frac{k_2}{w}}^{\frac{k_2+1}{w}}\int_{-a-\frac{1}{w}}^{a+\frac{1}{w}}\phi_1\left(\lambda  m_{0}(\Chi)|f(t_1,t_2)|\right)dt_1dt_2   \hspace{4pt} \|\Chi(\cdot,wx_2-k_2)\|_{L^1([-M,M]^c)}.
\end{flalign*}
Again using Jensen's inequality and Fubini-Tonelli theorem, we have
\begin{flalign*}
    J_3 
     & \leq  \frac{1}{\left(m_{0}(\Chi)\right)^2}   \sum_{\frac{k_2}{w} \in  [-a-\frac{1}{w}, a]}  \phi_{2} \left(   \hspace{2pt}w \int_{\frac{k_2}{w}}^{\frac{k_2+1}{w}} \int_{-a-\frac{1}{w}}^{a+\frac{1}{w}}\phi_1\left(\lambda  \left(m_{0}(\Chi)\right)^2|f(t_1,t_2)|\right) dt_1dt_2  \right) \hspace{4pt} \\
  &\hspace{2.5in} \times \int_{|x_2|>M} \|\Chi(\cdot,wx_2-k_2)\|_{L^1([-M,M]^c)}dx_2\\
       & \leq  \frac{1}{\left(m_{0}(\Chi)\right)^2}   \sum_{\frac{k_2}{w} \in  [-a-\frac{1}{w}, a]}   \hspace{2pt} \int_{\frac{k_2}{w}}^{\frac{k_2+1}{w}}  \phi_{2} \left( \int_{-a-\frac{1}{w}}^{a+\frac{1}{w}}\phi_1\left(\lambda  \left(m_{0}(\Chi)\right)^2|f(t_1,t_2)|\right)dt_1 \right) dt_2\\
       &\hspace{2.5in} \times  \int_{|x_2|>M} w \|\Chi(\cdot,wx_2-k_2)\|_{L^1([-M,M]^c)}dx_2.
        \end{flalign*}
        Using  Lemmma \ref{cc}, we get
        \begin{flalign*}
         J_3<  \frac{\epsilon}{3\left(m_{0}(\Chi)\right)^2}   I^{(\phi_1,\phi_2)}\left(\lambda  \left(m_{0}(\Chi)\right)^2 f\right)\hspace{4pt}.
\end{flalign*}
Thus
     \begin{flalign*}
        J &<   \frac{\epsilon}{\left(m_{0}(\Chi)\right)^2}   I^{(\phi_1,\phi_2)}\left(\lambda  \left(m_{0}(\Chi)\right)^2 f\right).
     \end{flalign*}

\textbf{Step 3.} For every $\epsilon >0$, $\exists \hspace{3pt} \delta >0$ such that   \begin{equation} \label{leb2}
\left(\int_{B_2} \int_{B_1}|\Chi(u_1,u_2)|du_1du_2 \right) < \frac{\epsilon \left(m_{0}(\Chi)\right)^2}{I^{(\phi_1,\phi_2)}(\lambda\left( m_{0}(\Chi)\right)^2f)}
\end{equation} for every pair of measurable sets $B_i \subset \RR$ with $\displaystyle \mu(B_i)<\delta $, for $i=1,2.$
  Proceeding along the same lines as in the proof of Theorem \ref{thm 5.1} and absolute continuity property of the Lebesgue integral (\ref{leb2}), we have
\begin{flalign*}
 	  \int_{B_2}\phi_{2}\left(\int_{B_1}\phi_{1}(\lambda|(K_{w}f)(x_1,x_2)|)dx_1\right)dx_2 &\leq   \frac{1}{\left(m_{0}(\Chi)\right)^2} I^{(\phi_1,\phi_2)}(\lambda\left( m_{0}(\Chi)\right)^2f)\hspace{1pt} \|\Chi\|_{L^{(1,1)}(B_1 \times B_2)}\\
      & < \epsilon.
 	\end{flalign*}
  Thus, the integrals 
  $$\int_{B_2}\phi_{2}\left(\int_{B_1}\phi_{1}(\lambda|(K_{w}f)(x_1,x_2)|)dx_1\right)dx_2$$
 are equi-absolutely continuous for arbitrary $B_1$ and $B_2$.
  
Therefore, by utilizing the Vitali convergence theorem along with the arbitrary choice of $\lambda>0$, we obtain $I^{\overrightarrow{\Phi}}(\lambda(K_{w}f-f)) \rightarrow 0$ as $w\to\infty$.
    Similarly, we can extend the result for any $n \geq 3.$
\end{proof}

 It is well established that $C_{c}(\RR)$ is dense in $L^{\phi}(\RR)$ \cite[Theorem 7.6]{mod}. However, to the best of our knowledge, it has not been proven that $C_{c}(\RR^{n})$  is dense in $ L^{\overrightarrow{\Phi}}(\mathbb{R}^n).$ 
To thoroughly explain this idea, we aim to provide detailed information as described below.
 \begin{lemma} \label{ce}
             The space $C_{c}(\RR^{n})$ is dense in $ L^{\overrightarrow{\Phi}}(\mathbb{R}^n).$ 
   \end{lemma}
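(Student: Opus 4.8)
The goal is to establish modular density: for every $f\in L^{\overrightarrow{\Phi}}(\mathbb{R}^n)$ fix $\lambda_0>0$ with $I^{\overrightarrow{\Phi}}(\lambda_0 f)<\infty$, and for every $\varepsilon>0$ I will produce $g\in C_{c}(\mathbb{R}^n)$ with $I^{\overrightarrow{\Phi}}\!\big(\tfrac{\lambda_0}{2}(g-f)\big)<\varepsilon$; since every element of $L^{\overrightarrow{\Phi}}(\mathbb{R}^n)$ admits such a $\lambda_0$, this gives the lemma. By the subadditivity of the modular $I^{\overrightarrow{\Phi}}$ (property III of Section \ref{2}), it suffices to split the construction into two stages, first approximating $f$ by a bounded compactly supported function and then approximating that function by one in $C_c(\mathbb{R}^n)$, each stage contributing at most $\varepsilon/2$ to $I^{\overrightarrow{\Phi}}(\lambda_0\,\cdot\,)$.

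Stage 1 (reduction to bounded compactly supported functions). Set $f_k:=f\cdot\chi_{[-k,k]^n}\cdot\chi_{\{|f|\le k\}}$. Since $|f-f_k|\le |f|$ and each $\phi_i$ is non-decreasing, monotonicity of $I^{\overrightarrow{\Phi}}$ gives $I^{\overrightarrow{\Phi}}(\lambda_0(f-f_k))\le I^{\overrightarrow{\Phi}}(\lambda_0 f)<\infty$. To see $I^{\overrightarrow{\Phi}}(\lambda_0(f-f_k))\to 0$, I would peel the $n$-fold iterated integral defining $I^{\overrightarrow{\Phi}}$ from the innermost layer outward and apply dominated convergence $n$ times: continuity of each $\phi_j$ at $0$ (a consequence of convexity together with $\phi_j(0)=0$) forces every layer's integrand to converge pointwise to $0$, while the dominating function at that layer is exactly the corresponding layer of the finite quantity $I^{\overrightarrow{\Phi}}(\lambda_0 f)$; the a.e.\ finiteness of each intermediate iterated integral is itself forced by $I^{\overrightarrow{\Phi}}(\lambda_0 f)<\infty$, for if some intermediate integral were $+\infty$ on a set of positive measure then, because $\phi_j(u)\to\infty$, all subsequent layers and hence $I^{\overrightarrow{\Phi}}(\lambda_0 f)$ itself would be $+\infty$. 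Choosing $k$ large, it is thus enough to approximate a bounded $h:=f_k$ with $\|h\|_\infty\le M$ and $\operatorname{supp} h\subseteq[-k,k]^n$.

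Stage 2 (from bounded compactly supported to $C_c$, via Lusin). By Lusin's theorem on $\mathbb{R}^n$, for any $\delta>0$ there is $g\in C_c(\mathbb{R}^n)$ with $\|g\|_\infty\le M$, $\operatorname{supp} g\subseteq Q:=[-k-1,k+1]^n$, and $|E|<\delta$ where $E:=\{g\ne h\}\subseteq Q$, so that $|g-h|\le 2M\,\chi_E$. The heart of the matter is the estimate $I^{\overrightarrow{\Phi}}(\lambda_0(g-h))\le C\,|E|$ with $C=C(\lambda_0,M,k,\overrightarrow{\Phi})$; once this is in hand, taking $\delta<\varepsilon/(2C)$ finishes. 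Write $R:=2(k+1)$ and, for $1\le j\le n$, let $m_j(x_{j+1},\dots,x_n)$ be the $j$-dimensional measure of the slice $\{(x_1,\dots,x_j):(x_1,\dots,x_n)\in E\}$, so $m_j\le R^j$. The innermost integral is bounded by $\phi_1(2\lambda_0 M)\,m_1$. For the next layer, since $m_1/R\in[0,1]$ and $\phi_2$ is convex with $\phi_2(0)=0$, the sub-homogeneity bound $\phi_j(\theta t)\le\theta\,\phi_j(t)$ for $\theta\in[0,1]$ yields $\phi_2\!\big(\phi_1(2\lambda_0 M)m_1\big)\le \tfrac{m_1}{R}\,\phi_2\!\big(R\,\phi_1(2\lambda_0 M)\big)$, and Fubini turns $\int m_1\,dx_2$ into $m_2$. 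Iterating this ``sub-homogeneity $+$ Fubini'' step through all $n$ layers gives $I^{\overrightarrow{\Phi}}(\lambda_0(g-h))\le C_n\,m_n=C_n\,|E|$, with $C_n$ an explicit constant built from the values of $\phi_1,\dots,\phi_n$ at fixed finite arguments. Combining the two stages through $I^{\overrightarrow{\Phi}}\!\big(\tfrac{\lambda_0}{2}(g-f)\big)\le I^{\overrightarrow{\Phi}}(\lambda_0(g-h))+I^{\overrightarrow{\Phi}}(\lambda_0(h-f))$ completes the proof.

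The main obstacle is the estimate in Stage 2: unlike a single Orlicz or Lebesgue modular, the iterated nonlinear functional $I^{\overrightarrow{\Phi}}$ of a function supported on a set of small measure cannot be dominated by (constant)$\times$(measure) in one stroke — one must linearize one layer at a time by playing the sub-homogeneity of the $\phi_j$ against Fubini's theorem, which is where the nested structure genuinely bites. A secondary technical point is the careful $n$-fold dominated-convergence argument of Stage 1, in which the a.e.\ finiteness of every intermediate iterated integral has to be justified. Both steps are carried out most transparently when each $\phi_j$ is finite-valued, which is the standard setting for such a density statement.
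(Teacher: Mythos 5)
Your proof is correct, but it follows a genuinely different route from the paper's. The paper first shows that simple functions are modularly dense (take increasing simple $f_m\uparrow|f|$ and pass to the limit in the iterated modular by dominated convergence), and then approximates the characteristic function of a (finite-measure) set $S$ by Urysohn functions $g_m$ sandwiched between $\chi_{U_m}$ and $\chi_{V_m}$ with $\mu(V_m\setminus U_m)\to 0$, again concluding by a dominated convergence argument applied to $I^{\overrightarrow{\Phi}}(\lambda\chi_{V_m\setminus U_m})$. You instead truncate $f$ to a bounded compactly supported $h$ and then invoke Lusin's theorem, reducing everything to the quantitative estimate $I^{\overrightarrow{\Phi}}(2\lambda_0 M\chi_E)\le C\,|E|$, which you obtain by alternating the sub-homogeneity $\phi_j(\theta t)\le\theta\phi_j(t)$ with Fubini, one layer at a time. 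The trade-off: the paper's argument is softer and shorter on the page, but it leans on a ``dominated convergence theorem for mixed norms'' cited from Benedek--Panzone, which is stated there for the linear spaces $L^{\vec P}$; justifying it for the nonlinear iterated modular $I^{\overrightarrow{\Phi}}$ requires precisely the layer-by-layer peeling (with the a.e.-finiteness of intermediate integrals) that you spell out in your Stage 1, so your version is in this respect more self-contained and more careful. Your Stage 2 replaces a qualitative limit by an explicit constant times $|E|$, which is a small but genuine gain. Both arguments tacitly need the $\phi_j$ to be finite at the relevant arguments (you flag this; the paper does not), and both tacitly use that the approximating sets have finite measure (the paper's inner regularity $U_m\subset S\subset V_m$ with compact $U_m$ already presupposes $\mu(S)<\infty$, which holds for the level sets of the standard simple approximants). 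No gap to report.
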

\begin{proof}
	Our first step is to prove the result for $n=2.$ 
	To establish the desired approximation, we demonstrate that any simple function can be closely approximated by a sequence of compactly supported continuous functions.
	Let $S \subset \RR^2$ be a measurable set. For each $m \in \NN,$ there exist a sequence of compact set $(U_m)$ and sequence of open sets $( V_m)$ such that $U_m \subset S \subset  V_m$ and $\mu( V_m \setminus U_m)<1/m.$ By Urysohn’s lemma, there exists $g_m \in C_c(\RR^2)$ such that
	$$g_m(\textbf{x}) = 
	\begin{cases} 
		1, & \text{for } \textbf{x} \in U_m, \\
		0, & \text{for } \textbf{x} \in \RR^2 \setminus  V_m.
	\end{cases}$$
It follows directly that for all $\textbf{x} \in \RR^2,$  the function $f_m$ satisfies $$\Chi_{U_m}(\textbf{x})\leq  g_m(\textbf{x}) \leq  \Chi_{ V_m}(\textbf{x}).$$  
This implies that $0 \leq |g_m(\textbf{x})-\Chi_{U_m}(\textbf{x})|\leq \Chi_{ V_m \setminus U_m}(\textbf{x})$ for all $\textbf{x} \in \RR^2.$ For every $\gamma >0$, we have
	$$I^{\overrightarrow{{\Phi}}} (\gamma(\Chi_S - g_m ))  
	\leq I^{\overrightarrow{{\Phi}}}(\gamma(\Chi_{ V_m} - \Chi_{U_m} ))  
	\leq I^{\overrightarrow{{\Phi}}}(\gamma( \Chi_{ V_m \setminus U_m} )).$$
Since \( |\Chi_{ V_m\setminus U_m}(\textbf{x})| \leq 1 \) for all \( \textbf{x} \in \mathbb{R}^2 \) and it converges to zero almost everywhere, the dominated convergence theorem for mixed norm (\cite[Section 2]{dct}) implies that  $I^{\overrightarrow{{\Phi}}}(\gamma( \Chi_{ V_m \setminus U_m} )) $ approaches to $zero$  as $m \to \infty.$ 
This establishes that  $I^{\overrightarrow{{\Phi}}} (\gamma(\Chi_S - g_m )) $  approaches to $zero$  as $m \to \infty.$  In other words, for any given $\epsilon>0$ and a simple function $h : \RR^2 \rightarrow \RR$, there exists a compactly supported  continuous function $g:\RR^2 \rightarrow\RR$  such that  
\begin{equation}\label{g1}
\|g-h\|_{\vc{\Phi}}<\frac{\epsilon}{2}.
\end{equation}

    
 It remains to prove that the class of simple functions is dense in $L^{\overrightarrow{\Phi}}(\mathbb{R}^2)$. For $f\in L^{\overrightarrow{\Phi}}(\RR^2),$ there exists an increasing sequence of simple functions $(f_n)_{n \in \NN}$   such that $0\leq f_n \uparrow |f|$ pointwise  as $n \rightarrow \infty$.  Moreover, for any $\alpha>0$, we have $\phi_1(\alpha(|f|-f_n))\leq \phi_1(\alpha f)$ and $\phi_1(\alpha(|f|-f_n))$ approaches to zero as $n \rightarrow \infty.$ Hence, by the \emph{dominated convergence theorem} for mixed norm, we have
 $$ \lim_{n\to \infty} \int _{\RR}\phi_2\left(\int_{\RR}\phi_1\left( \alpha|(|f|-f_n)(x_1,x_2)| \right)dx_1\right)dx_2 = 0.$$ Hence, for any positive function $|f|\in L^{\vc{\Phi}}(\RR^2)$ there exists a non-negative simple function arbitrary close to $|f|$.
\par 
As $f\in L^{\vc{\Phi}}(\RR^2)$, then we can write \( f = f^+ - f^- \), where \( f^+, f^- \in L^{\overrightarrow{\Phi}}(\mathbb{R}^2) \) are non-negative functions defined as follows:
\begin{equation*}
    f^+(x)=\max\{f(x), 0\} \qquad \text{and} \qquad f^-(x)=-\min\{f(x), 0\}.
\end{equation*}
Hence, for any \( \epsilon > 0 \) there exist non-negative simple functions $s^+$ and $s^-$ such that
\[
\|f^+ - s^+\|_{\overrightarrow{\Phi}} < \frac{\epsilon}{4}, \quad \|f^- - s^-\|_{\overrightarrow{\Phi}} < \frac{\epsilon}{4}.
\]
Therefore  for any $f\in L^{\overrightarrow{\Phi}}(\RR^2) $ and $\epsilon>0$, there exists a simple function \( s := s^+ - s^- \) such that
	\begin{equation}\label{g2}
		\|f - s\|_{\overrightarrow{\Phi}} \leq \|f^+ - s^+\|_{\overrightarrow{\Phi}} + \|f^- - s^-\|_{\overrightarrow{\Phi}} < \frac{\epsilon}{2}.
	\end{equation}
In view of the triangle inequality of mixed norm Orlicz space, and the equations \eqref{g1} and \eqref{g2}, we get $\|f-h\|_{\vc{\Phi}}  <\epsilon.$

This establishes the result for $n=2$. Following the similar lines of proof, one can extend this proof for any $n \geq 3.$
\end{proof}
\\

We now proceed to prove the main theorem of this section, namely the convergence of (\ref{kan}) in mixed norm Orlicz space $L^{\overrightarrow{\Phi}}(\RR^{n})$.

\begin{thm} \label{cg}
	Let $f\in L^{\overrightarrow{\Phi}}(\RR^{n})$ be fixed. Then we have
	$$ \lim_{w\to\infty}\|K_wf-f\|_{ \overrightarrow{\Phi}}=0. $$
\end{thm}
    \begin{proof}
We prove the result using the denseness of $ C_{c}(\RR^{n})$ in $L^{\overrightarrow{\Phi}}(\RR^{n})$ as established in Lemma \ref{ce}. Let $f\in L^{\overrightarrow{\Phi}}(\RR^{n})$ and $\epsilon >0$ be fixed. 
Thus, there exists $g \in C_{c}(\RR^{n})$ such that 
\begin{equation}\label{Ae}
\|f-g\|_{\overrightarrow{\Phi}} <\frac{\epsilon}{2 \left( 1+\left(m_0(\Chi)\right)^n\right) }.
\end{equation}
 Again Theorem \ref{thm 5.2} implies there exists $N_0\in \NN$ such that for each $w\geq N_0$, we have $$\|K_wg-g\|_{\vc{\Phi}}<\frac{\epsilon}{2}.$$
Now for $w\geq N_0$, the triangle inequality of $ \|\cdot\|_{\overrightarrow{\Phi}}$ along with \eqref{Ae} and Remark \ref{rem 5.2}, we obtain
	\begin{align*}
			\| K_wf-f\|_{\overrightarrow{\Phi}} &\leq \|g-f\|_{  \overrightarrow{\Phi}}+ \|K_wf-K_wg\|_{\vc{\Phi}}+ \|K_wg-g\|_{\vc{\Phi}}\\
			&<  \left( 1+\left(m_0(\Chi)\right)^n\right) \|g-f\|_{  \overrightarrow{\Phi}} + \frac{\epsilon}{2}\\
			&<\epsilon.
	\end{align*}
	This completes the proof.
\end{proof}

\section{Examples of kernel} \label{6}
 Kernels hold significant importance in the theoretical framework.
In this section, we present several examples of suitable kernels that satisfy assumptions (\ref{re1}) and (\ref{re2}) of our presented theory. Assume that for each $i=1,\dots,n$, $\Chi_i:\RR\to \RR$ is a kernel satisfies
  $$\displaystyle \sum_{{k_i}\in \ZZ}\Chi_{i}({u_i}-{k_i})=1, \qquad u_i\in \RR,$$
  and 
  $$  m_{0}(\Chi_i) := \sup_{u_i \in \RR}\sum_{k_i \in \ZZ}|\Chi_i(u_i-k_i)|<+\infty.$$
Then the kernel $\Chi:\RR^n\to \RR$ is defined by
$$\displaystyle \Chi(\textbf{u})=\prod_{i=1}^{n}\Chi_{i}(u_i),$$ 
belongs to $L^{1}(\RR^n),$ as
$$\displaystyle \int_{\RR^n}|\Chi(\textbf{u})|d\textbf{u}=\int_{\RR^n}\prod_{i=1}^{n}|\Chi_{i}(u_i)|du_1...du_n=\prod_{i=1}^{n}\int_{\RR}|\Chi_{i}(u_i)|du_i < \infty.$$ 
Moreover, from the above observations, we have
$$ \sum_{\textbf{k}\in \ZZ^n}\Chi(\textbf{u}-\textbf{k})=\prod_{i=1}^{n}\sum_{k_i\in \ZZ}\Chi_i(u_i-k_i)=1,$$ and
$$m_{0}(\Chi)= \sup_{\textbf{u} \in \RR^n}\sum_{\textbf{k}\in \ZZ^n}|\Chi(\textbf{u}-\textbf{k})|=\prod_{i=1}^{n}m_{0}(\Chi_i)<+\infty.$$

\begin{rem} (\cite{bardaro10})\label{rem}
	(a)  If $\Chi(x)=\mathcal{O}(x^{-1-\epsilon-\alpha})$  for $x \rightarrow \pm \infty$ and some $\epsilon>0,$ the kernel $\Chi$ satisfies (\ref{re2}). \\
	(b) The condition \eqref{re1} is equivalent to the following condition
	$$
	\widehat{\Chi}(k)=
	\begin{cases}
		{1}, &\quad\text{} \ \  {k=0}\\
		{ 0,} &\quad\text{} \ \  {k \neq 0},\\
	\end{cases}$$\\
	where $\widehat{\Chi}(u) := \int_{\RR} \Chi(v)e^{-ivu}\,dv,$ $u \in \RR$ is the Fourier transform of $\Chi$.
\end{rem}

Next, we provide several important examples of kernels that satisfy the assumptions of our proposed theory.\\

\textbf{(I) Fej\'er kernel:}
The Fej\'er kernel is defined as (\cite{bardaro10,multi})
$$F(x) :=\frac{1}{2}\,\sinc^{2}\left(\frac{x}{2}\right),\qquad x \in \RR.$$
It can be observed that the Fourier transform of sinc-function is given by
$$
\widehat{\sinc}(v) =
\begin{cases}
	{1} &\quad\text{} \ \  {|v| \leq \pi},\\
	{ 0,} &\quad\text{} \ \  {|v|>\pi}.\\
\end{cases}
$$
 Indeed $F \in L^1(\RR)$ is bounded. In view of Remark \ref{rem}(a), one can verify that $F$ satisfies \eqref{re2}. Furthermore, the Fourier transform of $F$ is given by
$$
\widehat{F}(v)=
\begin{cases}
	{1-|\frac{v}{\pi}|}, &\quad\text{} \ \  {v \leq \pi},\\
	{ 0,} &\quad\text{} \ \  {v>\pi}.\\
\end{cases}$$\\
Let $\displaystyle \mathcal{F}(\textbf{x}):=\prod_{i=1}^{n}  F(x_i)$ be the multivariate Fej$\mathbf{\acute{e}}$r's kernel. Then $\mathcal{F}$ fulfills the conditions (\ref{re1}) and (\ref{re2}) associated with the generalized kernel (see \cite{multi}).\\

In order to reduce truncation error, one can take \(\Chi(x)\) with faster decay as \(x \rightarrow \pm \infty\). One prominent example in this direction is the Jackson kernel.

\textbf{(II)  Jackson kernel:}
The Jackson kernel with parameters  $\beta \geq 1,m \in \mathbb{N}$ is given by (\cite{bardaro10})
$$ \overline{J_{\beta,m}}(x):= C_{\beta,m}\ \sinc^{2m} \left(\frac{ x}{2 \beta m \pi} \right) \quad (x \in \RR),$$
with $\displaystyle C_{\beta,m} := \left[\int_{\RR} \sinc^{2m} \left(\frac{x}{2 \beta m \pi} \right){dx}\right]^{-1}$.
\par
It can be observed that the Fourier transform of the Jackson kernel vanishes outside the interval, i.e., $\overline{J_{\beta,m}}(x)$ are bandlimited to $[-{1}/{\beta},{1}/{\beta}]$.  As a consequence of Remark \ref{rem}(a) and (b), we conclude that (\ref{re1}) and (\ref{re2}) are satisfied. 
\par
 As in the previous case, the multivariate Jackson kernel given by 
 $\displaystyle \mathcal{J}(\textbf{x}) = \prod_{i=1}^{n} \overline{J_{\beta,m}}(x_i),$
 satisfies all the desired properties.\\

One can minimize the truncation error by selecting a suitable kernel $\Chi$ with compact support. A noteworthy example in this direction is the well-known B-spline kernel.

\textbf{(III) B-spline kernel: }
The B-spline kernel of order $m \in\NN$ is defined as (\cite{bardaro10,multi})
$${B}_{m}(t):= \frac{1}{(m-1)!} \sum_{j=0}^{m} (-1)^{j} {m \choose j} \bigg( \frac{m}{2}+ t-j \bigg)_{+}^{m-1},\qquad t\in \mathbb{R},$$ where $(x)_{+} := \max \{x,0\},$ $ x \in \mathbb{R}.$ 
The Fourier transform of $B_m$ is given by
\begin{equation*} \label{e4}
	\widehat{B_{m}} (v)=
	\sinc^m\left(\frac{v}{2\pi}\right).
\end{equation*}
Based on Remark \ref{rem}(b) we conclude that $ {B}_{m}$ satisfies $(\ref{re1})$. Furthermore, ${B}_{m}$ is bounded on $\RR$ for all $m \in \NN $, with compact support contained in $[-m/2,m/2].$ Therefore, we can say that $B_m \in L^1(\RR)$.  Moreover, the moment condition is satisfied for all $\alpha>0$.
As in the previous case, the multivariate B-spline kernel can be written as
$$\displaystyle \mathcal{B}(\textbf{x})=\prod_{i=1}^{n}  B_m(x_i),$$ which satisfies all the properties of a kernel.\\ 

Another example in this direction is the  Bochner–Riesz kernel.

\textbf{(IV) Bochner-Riesz kernel:}
The Bochner–Riesz kernel is defined as (\cite{nessel})

\[
 B^{\gamma}(\textbf{x}) = \frac{2^{\gamma}}{\left(\sqrt{2\pi}\right)^{N}}  \hspace{2pt}\Gamma(\gamma + 1)\left(|\textbf{x}|\right)^{-(N/2 - \gamma)} J_{\left(N/2 \right) + \gamma}(|\textbf{x}|), \quad  \textbf{x}\in \RR^n
\]
for $\gamma > 0$, where $J_{\lambda}$ is the Bessel function of order $\lambda$ and $\Gamma$ is Gamma function. The Fourier transform of $ B^{\gamma}$ is given by
\[
\widehat{B}^{\gamma}(\boldsymbol{\xi}) =  
\begin{cases} 
(1 - |\boldsymbol{\xi}|^2)^{\gamma}, & |\boldsymbol{\xi}| \leq 1, \\
0, & |\boldsymbol{\xi}| > 1.
\end{cases}
\]
In view of Remark \ref{rem}, we conclude that $B^{\gamma}$ satisfies $(\ref{re1})$ and $(\ref{re2})$.


\section{Conclusion}

This article aims to study the approximation properties of a well-known family of sampling operators within a broad framework of mixed norm function spaces. This not only contributes to approximation theory, but also explores mixed norm spaces in function theory. To analyze the boundedness of generalized sampling operators in mixed norm Lebesgue spaces, we have introduced an appropriate subspace of \( L^{\overrightarrow{P}}(\mathbb{R}^n) \), denoted by \( \Delta^{\overrightarrow{P}}(\mathbb{R}^n) \), in Section~\ref{3}. 
Further, we have established the boundedness and convergence of Kantorovich-type sampling operators within the broad framework of mixed norm Lebesgue spaces and mixed norm Orlicz spaces. To prove the proposed results, we have proved a fundamental density result stating that the space of compactly supported continuous functions $C_c(\RR^n)$ is dense in mixed norm Orlicz space $ L^{\overrightarrow{\Phi}}(\mathbb{R}^n).$ 

\section{Information.} 

\subsection{Ethic.}
\noindent 
The authors declare that the present work is original and has not been published elsewhere nor is it under consideration for publication elsewhere. No part of the manuscript has been copied or plagiarized from other sources, and all results and references have been properly acknowledged.

\subsection{Funding.} 
 \noindent The first author is supported by the University Grants Commission (UGC), New Delhi, India, under NTA Reference No : 231610034215. The third author acknowledges funding from the Deutsche Forschungsgemeinschaft (DFG, German Research Foundation) - Project number 442047500 through the Collaborative Research Center ``Sparsity and Singular Structures” (SFB 1481).

\subsection{Acknowledgement}
\noindent 
Priyanka Majethiya and Shivam Bajpeyi express their gratitude to SVNIT Surat, India, for providing the necessary facilities to carry out this research work.

\subsection{Data availability.} 
\noindent 
Data sharing is not applicable to this article, as no data sets were generated
or analyzed during the current study.

\subsection{Authorship contribution.}
\noindent
\textbf{Priyanka Majethiya}: Writing – Original Draft, Writing – Review and Editing, Conceptualization, Formal Analysis, Methodology, Mathematical Proofs, Visualization. \\
\textbf{Shivam Bajpeyi:} Writing – Original Draft, Writing – Review and Editing, Conceptualization, Formal Analysis, Methodology, Mathematical Proofs, Visualization, Supervision.\\
\textbf{Dhiraj Patel:} Writing – Original Draft, Writing – Review and Editing, Conceptualization, Formal Analysis, Methodology, Mathematical Proofs, Visualization, Supervision.

\subsection{Conflict of interest.} 
\noindent 
The author declares no conflict of interest related to the content of this article.

\end{document}